\newcommand{\set}[2]{\left\{ #1 \,\middle|\, #2 \right\}}
\numberwithin{equation}{subsection} \DeclareMathSizes{2}{10}{12}{13}
\theoremstyle{plain}
\newtheorem{thm}{Proposition}[section]
\newtheorem{cor}[thm]{Corollary}
\newtheorem{lem}[thm]{Lemma}
\newtheorem{Thm}[thm]{Theorem}
\theoremstyle{remark}
\newtheorem{rem}[thm]{Remark}
\theoremstyle{definition}
\newtheorem{defn}[thm]{Definition}
\newtheorem{notn}[thm]{Notation}
\newtheorem{cnst}[thm]{Construction}
\numberwithin{thm}{section} 
\title{Noncommutative tensor triangulated categories and coherent frames}
\author{Vivek Mohan Mallick\footnote{Department of Mathematics, Indian
    Insititute of Science Education and Research (IISER) Pune, Pune 411008,
    India,\\
  email: vmallick@iiserpune.ac.in} \hspace{1in} Samarpita Ray \footnote{Center for Geometry and Physics, Institute for Basic Science (IBS), Pohang 37673, South Korea\\
email: ray.samarpita31@gmail.com}   }
\date{}
\begin{document}
	
	\maketitle 
	
	\medskip

	\begin{abstract} 
	  We develop a point-free approach for constructing
	  the Nakano-Vashaw-Yakimov-Balmer spectrum of a noncommutative
	  tensor triangulated category under some mild assumptions. In
	  particular, we provide a conceptual way of classifying radical
	  thick tensor ideals of a noncommutative
	  tensor triangulated category using frame theoretic methods,
	  recovering the universal support data in the process. We further
	  show that there is a homeomorphism between the spectral space of
	  radical thick tensor ideals of a noncommutative
	  tensor triangulated category and the collection of open subsets of its spectrum in the Hochster dual topology.
	\end{abstract}
	
	\medskip
	MSC(2020) Subject Classification:   06D22, 18F70, 18G80, 18M05, 54Exx, 55P43

	\medskip
	Keywords : tensor triangulated categories, Balmer spectrum, thick tensor ideals, coherent
	frames, spectral spaces, Hochster dual, topological Nullstellensatz

	\section{Introduction}
	
	The subject of tensor triangular geometry has been an active area of
	research for the past two decades and has touched a wide range of
	areas in mathematics including algebraic geometry, modular
	representation theory, stable homotopy theory, noncommutative
	topology, to name a few. The subject involves the study of
	triangulated categories with a given biexact symmetric monoidal
	functor called the tensor. Balmer \cite{balmer:preshvtrcat,
	balmer:spectrcat} showed that the triangulated category of perfect
	complexes over a scheme $X$ along with the derived tensor functor
	contains enough data to reconstruct $X$, establishing that the
	subject is rich enough to be studied. Associated to a tensor
	triangulated category $\mathcal{C}$, Balmer \cite{balmer:spectrcat}
	constructed a locally ringed space $Spec\ \mathcal{C}$ called
	the spectrum of $\mathcal{C}$ which carries the geometric essence of
	the tensor triangulated category. For example,
	$Spec\bigl(\mathrm{Dperf}(X)\bigr) \cong X$ for a quasi-compact,
	quasi-separated scheme $X$ (see \cite{balmer:spectrcat, MR2280286}).
	The construction of the spectrum involves constructing a space out
	of prime thick tensor ideals of the tensor triangulated category.
	Balmer showed that this satisfies the correct universal property
	among all spaces which act as targets for support data.  In the case
	of modular representation theory, where the relevant tensor
	triangulated category is the stable module category of modules over
	a finite group scheme $G$, the spectrum recovers the projective
	support variety.

	In addition to capturing the underlying scheme, the subject of tensor
	triangular geometry further lifts to this abstraction, the
	notions of finite {\'e}tale maps
	\cite{balmerdellivo:restrictedetale}, the Chow group and
	intersection theory \cite{balmer:ttchowgp,MR3448183,MR3423452},
	Grothendieck-Neeman duality, Wirthm{\"u}ller isomorphism
	\cite{balmerdellivo:gnduality} among others. The theory also detects
	(the failure of) Gersten conjecture for singular schemes
	\cite{MR2439430}. This demonstrates the richness of the theory.

	However, all tensor structures on triangulated categories need not
	be symmetric. The basic examples being stable module categories over
	Hopf algebras. To study these Nakano, Vashaw and Yakimov
	\cite{1909.04304} introduced a noncommutative version of tensor
	triangulated categories and, extending Balmer's theory, constructed
	a topological space in terms of prime thick tensor ideals. This
	construction is also universal in a manner parallel to Balmer's
	construction. Nakano et al prove that in this case again, the space
	corresponds to the projective support variety described in terms of
	the cohomology of the Hopf algebra.

	Following the success of the theories a natural direction of
	exploration will be to understand the construction of spectrum
	itself to gain a better insight into the structure of the tensor
	triangulated category. A conceptual and formal way of constructing
	the spectrum was described by Kock and Pitsch \cite{kopi:hochdual}
	using the language of frames and locales (point free topology).
	Frames are complete lattices where finite meet distributes over
	arbitrary joins (see \cite{johnstone:stonespaces} or section \ref{sec:frames}).  A
	typical example of a frame is the lattice of open subsets of a
	topological space. The essence of point free approach to toplogy is
	to reduce the study of topology to the study of these frames.  In
	this approach, one constructs the topological space in terms of the
	frame of open sets instead of starting with a set of points.
	Spectral spaces are topological spaces homeomorphic to the spectrum
	of a commutative ring. The spectral spaces correspond to coherent
	frames (see Definition \ref{def:cohframe}) in the point free
	approach. Kock and Pitsch gave a point free description of the
	Balmer spectrum of a (commutative) tensor triangulated category
	$\mathcal{C}$ as the Hochster dual of the spectral space associated
	to the \emph{coherent} frame of radical thick tensor ideals of the
	tensor triangulated category.  They also define a notion of support
	taking values in a frame and prove that the coherent frame of
	radical thick tensor ideals is universal as a target for such
	supports. Kock and Pitsch's paper shows that one can arrive at
	various results about the Balmer spectrum (including the sheaf of
	rings)  from this viewpoint.

	In this paper, we explore the noncommutative Balmer spectrum
	studied by Nakano et al from a frame
	theoretic viewpoint. However, in this case the arguments have to be
	modified substantially due to the lack of commutativity of tensor
	and consequently the failure of the construction of the radical of
	an ideal by adjoining $n$-th roots (where $n$ is a natural number).
	For the modified arguments to work, one needs to restrict to a class
	of noncommuative tensor triangulated categories, where the prime
	ideals (defined in terms of thick tensor ideals as $IJ \subseteq P
	\implies (I \subseteq P) \text{ or } (J \subseteq P)$) are complete primes
	(defined in terms of objects, i.e., $A \otimes B \in P \implies (A
	\in P) \text{ or } (B \in P)$). As Nakano et al \cite{nvy:supptpp} shows there is
	a rich class of examples where this holds, for instance the stable module
	category of any finite dimensional Hopf algebra. Following Kock and
	Pitsch, we extend the notion of a frame theoretic support data to
	the noncommuative setup and prove the relevant universal properties
	to recover the spectrum.
	Let $\mathbf{K}$ be a (noncommuative)
	tensor triangulated category.
	The radical thick tensor ideals form a coherent frame (Theorem
	\ref{cohfr}) and the association of $a \in Ob(\mathbf{K})$ to the
	smallest radical thick tensor ideal containing $a$ gives a universal
	frame-theoretic support datum (Theorem \ref{thm:initsupp}) giving us
	a classification of radical thick tensor ideals (Theorem
	\ref{corres}).
	The relation of this construction with the
	Nakano-Vashaw-Yakimov-Balmer spectrum is clarified in Corollary
	\ref{Hdual}.
	Nakano et al's construction of the universal support data taking
	values in the Balmer spectrum is recovered in a frame theoretic way
	in Proposition \ref{pro:cmpkpnvy}.
	Finally, extending a result by Banerjee \cite{MR3787524}, we also show
	that there is a homeomorphism between the set of radical thick
	tensor ideals and the set of closed subsets of the spectrum with
	quasi-compact complements under the proper notions of topologies on
	these sets (see Theorem \ref{noncomTN}).

	\vspace{.1in}

	\noindent\textbf{Acknowldegement}: The first author would like to
	express his gratitude to Indian Institute of Science Education and
	Research (IISER), Pune from where he contributed to this work. The second author's work was partially supported by the SERB NPDF grant PDF/2020/000670 and partially by the grant IBS-R003-D1 of the IBS-CGP, POSTECH, South Korea. The second author would like to sincerely thank both funders for the support.

	\section{Preliminaries}
	\subsection{Noncommutative tensor triangulated category and support}
	A general noncommutative theory of tensor triangular geometry was introduced by Nakano, Vashaw and Yakimov in \cite{1909.04304}. They further studied support maps and its connection with tensor product in the setup of noncommutative tensor triangular geometry in their next paper \cite{nvy:supptpp}. In this section, we recall some definitions and results from \cite{1909.04304} and \cite{nvy:supptpp}.\\
	A noncommutative tensor triangulated category, as introduced in  \cite{1909.04304}, is a triangulated category $\mathbf{K}$ with a biexact monoidal structure. Throughout this paper $\mathbf{K}$ will denote an essentially small  noncommutative tensor triangulated category.
	\begin{defn}[\cite{1909.04304}, $\S 1.2$]
		\begin{itemize}
			\item[(1)] A {\it thick tensor} ideal of $\mathbf{K}$ is a full triangulated subcategory $\mathbf{I}$ of $\mathbf{K}$ such that it contains all direct summands of its objects and for any $A\in Ob(\mathbf{I})$, we have $A\otimes B, B\otimes A\in Ob(\mathbf{I})$ for all $B\in Ob(\mathbf{K})$. 
			\item[(2)] A prime ideal of $\mathbf{K}$ is a proper thick tensor ideal $\mathbf{P}$ such that for all thick tensor ideals $\mathbf{I}$ and $\mathbf{J}$ of $\mathbf{K}$, we have $\mathbf{I}\otimes \mathbf{J} \subseteq \mathbf{P} \implies \mathbf{I}\subseteq \mathbf{P} $ or $\mathbf{J}\subseteq \mathbf{P} $. We denote by $Spc(\mathbf{K})$ the collection of all prime ideals of $\mathbf{K}$.
			\item[(3)] A completely prime ideal of $\mathbf{K}$ is a proper thick tensor ideal $\mathbf{P}$ such that $A\otimes B\in \mathbf{P}\implies A\in \mathbf{P}$ or $B\in \mathbf{P}$ for all $A,B\in Ob(\mathbf{K})$.
		\end{itemize}
	\end{defn}
\begin{defn}[\cite{1909.04304}, $\S 1.2$]\label{SpcK}
	The noncommutative Balmer spectrum $Spc(\mathbf{K})$ of $\mathbf{K}$  is the topological space of prime
	ideals of $\mathbf{K}$  endowed with the Zariski topology which is given by closed sets of the form
	$$V(S)= \{\mathbf{P}\in Spc(\mathbf{K})~|~\mathbf{P}\cap S=\emptyset\}$$
	for all subsets $S$ of $\mathbf{K}$.
\end{defn}
	Let $X$ be a topological space and let $\mathcal{X}_{cl}(X)$ denote the collection of all closed subsets of $X$.
	\begin{defn}[\cite{1909.04304}, Definition 4.1.1] \label{def:nvysuppdata}
		A map $\sigma:\mathbf{K}\longrightarrow \mathcal{X}_{cl}(X)$ is called a (noncommutative) support datum if the following conditions are satisfied:
		\begin{itemize}
			\item[(1)] $\sigma(0)=\emptyset$ and $\sigma(1)=X$
			\item[(2)] $\sigma(A\oplus B)=\sigma(A)\cup \sigma(B), \quad \forall A, B\in Ob(\mathbf{K})$
			\item[(3)] $\sigma(\sum A)= \sigma(A), \quad \forall A\in Ob(\mathbf{K})$
		\item[(4)] If $A\longrightarrow B \longrightarrow C\longrightarrow \sum A$ is a distinguished triangle, then $\sigma(A)\subseteq \sigma(B)\cup \sigma(C)$
		\item[(5)] $\bigcup_{C\in Ob(\mathbf{K)}}\sigma(A\otimes B \otimes C)=\sigma(A)\cap \sigma(B), \quad \forall A, B\in Ob(\mathbf{K})$
	\end{itemize}
	\end{defn}
We recall (see, \cite{1909.04304}*{Lemma 4.1.2}) that the restriction of the map $V$ (as in Definition \ref{SpcK}) to the objects of $\mathbf{K}$ gives a support datum $\mathbf{K}\longrightarrow \mathcal{X}_{cl}(Spc(\mathbf{K}))$	
\begin{Thm}[\cite{1909.04304}, Theorem 4.2.2]
	The support $V$ is final among all the support data $\sigma$ of $\mathbf{K}$ such that $\sigma(A)$ is closed for each $A\in Ob(\mathbf{K})$. Equivalently, for any support datum $\sigma$ satisfying the above condition, there is a unique continuous map $f_\sigma: X \longrightarrow Spc(\mathbf{K})$ such that  $\sigma(A)= f_\sigma^{-1}(V(A))$. This map is precisely given by 
	\begin{equation*}
		f_\sigma(x) = \{A\in Ob(\mathbf{K})~|~ x\notin \sigma(A)\}.
	\end{equation*}
	
\end{Thm}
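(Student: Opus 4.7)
The plan is to show uniqueness first (which determines the formula for $f_\sigma$), then verify that the prescribed $f_\sigma$ lands in $Spc(\mathbf{K})$, and finally establish continuity and the intertwining identity $\sigma(A)=f_\sigma^{-1}(V(A))$.

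\textbf{Uniqueness and the shape of the formula.} Suppose $g:X\to Spc(\mathbf{K})$ is any continuous map with $\sigma(A)=g^{-1}(V(A))$ for all $A$. By the definition of $V(A)$, a point $\mathbf{P}\in Spc(\mathbf{K})$ lies in $V(A)$ iff $A\notin\mathbf{P}$. Hence for $x\in X$,
\[
A\in g(x) \iff g(x)\notin V(A) \iff x\notin g^{-1}(V(A))=\sigma(A),
\]
so $g(x)=\{A\in Ob(\mathbf{K})\mid x\notin \sigma(A)\}$. This forces $g=f_\sigma$ and fixes the formula.

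\textbf{Existence: $f_\sigma(x)\in Spc(\mathbf{K})$.} Fix $x\in X$ and write $\mathbf{P}:=f_\sigma(x)=\{A\mid x\notin\sigma(A)\}$. I would check the axioms of a prime thick tensor ideal one by one, reading off each closure property from a single axiom of Definition \ref{def:nvysuppdata}. Closure under direct summands is immediate from (2); closure under $\Sigma$ from (3); closure under the third vertex of a triangle from (4); and properness ($\mathbf{1}\notin \mathbf{P}$) from (1), since $x\in X=\sigma(\mathbf{1})$. Closure under two-sided tensor with arbitrary $B\in Ob(\mathbf{K})$ uses (5) at $C=\mathbf{1}$, which yields $\sigma(A\otimes B)\subseteq \sigma(A)\cap\sigma(B)\subseteq \sigma(A)$ and likewise $\sigma(B\otimes A)\subseteq\sigma(A)$, so $A\in\mathbf{P}$ forces $A\otimes B,\,B\otimes A\in\mathbf{P}$. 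For primality, assume $\mathbf{I}\otimes\mathbf{J}\subseteq\mathbf{P}$ but neither $\mathbf{I}$ nor $\mathbf{J}$ is contained in $\mathbf{P}$; pick $A\in\mathbf{I}$ and $B\in\mathbf{J}$ with $x\in\sigma(A)\cap\sigma(B)$. By (5), $x\in \sigma(A\otimes B\otimes C)$ for some $C\in Ob(\mathbf{K})$. But $A\otimes B\in\mathbf{I}\otimes\mathbf{J}\subseteq\mathbf{P}$ and $\mathbf{P}$ is a tensor ideal, so $A\otimes B\otimes C\in\mathbf{P}$, i.e.\ $x\notin\sigma(A\otimes B\otimes C)$, a contradiction.

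\textbf{Continuity and the intertwining identity.} A direct unwinding gives
\[
f_\sigma^{-1}(V(A))=\{x\in X\mid A\notin f_\sigma(x)\}=\{x\in X\mid x\in\sigma(A)\}=\sigma(A),
\]
which is closed by the hypothesis on $\sigma$. Since the family $\{V(A)\mid A\in Ob(\mathbf{K})\}$ generates the closed sets of $Spc(\mathbf{K})$ under arbitrary intersections (as $V(S)=\bigcap_{A\in S}V(A)$), preimages of all closed sets are closed, so $f_\sigma$ is continuous.

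\textbf{Main obstacle.} The only nontrivial step is verifying that $f_\sigma(x)$ is prime; this is where the tensor-product axiom (5) is used essentially and where the noncommutative setting requires the ``two-sided'' tensor-ideal check. Every other axiom of a prime ideal corresponds to one clause of Definition \ref{def:nvysuppdata} almost tautologically, so once (5) is leveraged to deduce both the ideal property and primality, the rest of the proof reduces to the formal set-theoretic identities above.
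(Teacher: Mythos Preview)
The paper does not prove this theorem: it is recalled in the Preliminaries (Section 2.1) with the citation \cite{1909.04304}*{Theorem 4.2.2} and no argument is given. So there is no ``paper's own proof'' against which to compare.

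That said, your proof is correct and is essentially the standard argument one finds in the cited source. The uniqueness computation is forced, as you observe; the verification that $f_\sigma(x)$ is a thick tensor ideal reads each clause off a single axiom of Definition~\ref{def:nvysuppdata}; and the primality step is the only place where axiom (5) is used in full strength. One small remark: when you say ``closure under the third vertex of a triangle from (4)'', note that (4) as stated gives only $\sigma(A)\subseteq\sigma(B)\cup\sigma(C)$, so literally it shows $B,C\in\mathbf{P}\Rightarrow A\in\mathbf{P}$; to get closure under the \emph{third} object for any pair you must invoke rotation of triangles, which is implicit but worth saying. Otherwise the argument is complete.
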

	\subsection{Coherent frames and support}
	\label{sec:frames}
	
	In this section, we recall some definition and results from \cite{johnstone:stonespaces}, \cite{joyal:specsp} and \cite{kopi:hochdual}.
	\begin{defn}
	    A \emph{frame} is a complete lattice which satisfies the infinite distributive law:
		$$a\wedge \underset{s\in S} {\vee}s = \underset{s\in S}{\vee}(a\wedge s).$$
		A frame map is a lattice map that preserves arbitrary joins. The category of frames and frame maps is denoted by $\mathbf{Frm}$.
	\end{defn}
There is a pair of adjoint functors between the category of topological
spaces $\mathbf{Top}$ and the opposite category of frames
$\mathbf{Frm}^{op}$ which we now recall (\cite{johnstone:stonespaces}*{\S II.1.4}). The open sets of any topological space form a frame with join operation given by union of open sets and finite meet given by intersection. This gives a functor 
$\mathbf{Top}\longrightarrow \mathbf{Frm}^{op}$ which has a right adjoint, the functor of {\it points}. A {\it point} of a frame is a frame map $x:F\longrightarrow \{0,1\}$ where $\{0, 1\}$ is the
Boolean algebra of two elements (with $0 < 1$). The set of points of any frame form a topological space whose open sets are given by sets of the form $\{x: F\longrightarrow\{0,1\}~|~x(u)=1\}$ for any $u\in F$ and this gives the functor $\mathbf{Frm}^{op}\longrightarrow \mathbf{Top}$.\\
We recall from \cite{johnstone:stonespaces}*{\S II.3.1} that an element $a$ of a frame $F$ is called {\it finite} if for every subset $S\subseteq F$ with $a\leq \underset{s\in S}{\vee} s$, there exists a finite subset $S'\subseteq S$ with $a\leq \underset{s\in S'}{\vee} s$.
\begin{defn}[\cite{johnstone:stonespaces}, \S II.3.2] \label{def:cohframe}
	A frame is called {\it coherent} if every element of the frame can be expressed as a join of finite elements and the finite elements form a sublattice (equivalently, 1 is finite and the meet of two finite elements is finite).
\end{defn}
	{\it Spectral spaces}, introduced by Hochster in \cite{MR251026}, are topological spaces homeomorphic to the spectrum of a commutative ring. A {\it spectral map} between spectral spaces is a continuous map such that the inverse image of a quasi-compact open is quasi-compact. Every coherent frame corresponds uniquely to a spectral space. In fact, we have the following theorem:
\begin{Thm}[\cite{joyal:specsp}]\label{spec-cohfr}
	The category of spectral spaces and spectral maps is contravariantly equivalent to the category of coherent frames and coherent maps.
\end{Thm}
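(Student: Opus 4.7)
The plan is to establish the equivalence by constructing explicit functors in both directions and reducing to the classical Stone duality for bounded distributive lattices. First I would define, for a coherent frame $F$, the topological space $\text{pt}(F)$ of points (i.e.\ frame maps $F\to\{0,1\}$), equipped with the topology whose opens are $U_u:=\{x\in\text{pt}(F):x(u)=1\}$ for $u\in F$. In the other direction, a spectral space $X$ is sent to its frame of opens $\Omega(X)$. On morphisms, a coherent frame map $\varphi:F\to G$ induces $x\mapsto x\circ\varphi$ on points, while a spectral map $f:X\to Y$ pulls back opens. Contravariant functoriality is immediate in both cases.

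Next I would verify the functors land in the stated target categories. For $\Omega(X)$ with $X$ spectral, the finite elements of $\Omega(X)$ are exactly the quasi-compact opens, and by definition of a spectral space these form a sublattice (closed under finite intersections, containing $X$ itself) that generates $\Omega(X)$ under arbitrary joins, so $\Omega(X)$ is coherent. For $\text{pt}(F)$ with $F$ coherent, one checks that the opens $U_u$ with $u$ finite form a basis closed under finite intersections, each $U_u$ is quasi-compact (via finiteness of $u$), and $\text{pt}(F)$ is sober and $T_0$; these together give a spectral space. Spectrality of induced maps is then routine.

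Then I would reduce the whole equivalence to classical Stone duality for bounded distributive lattices. Let $K(F)$ denote the sublattice of finite elements of $F$; it is a bounded distributive lattice, and the assignment $F\mapsto K(F)$ is an equivalence between coherent frames with coherent maps and bounded distributive lattices with lattice maps, with inverse $L\mapsto\text{Idl}(L)$ (the frame of ideals). Under this identification, $\text{pt}(F)$ coincides with the Stone space of prime filters of $K(F)$, and $\Omega(X)$ corresponds to the lattice of quasi-compact opens of $X$. The equivalence between spectral spaces and bounded distributive lattices is Stone's original duality, which yields the desired contravariant equivalence by composition.

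The main obstacle is verifying that the unit and counit are natural isomorphisms, for which the essential ingredient is the existence of sufficiently many points. Concretely, for a coherent frame $F$ one must show that the map $F\to\Omega(\text{pt}(F))$, $u\mapsto U_u$, is an isomorphism; writing $u=\bigvee\{k\in K(F):k\leq u\}$ reduces injectivity to showing that distinct finite elements are separated by some point of $F$, which is a direct application of the prime ideal theorem (Birkhoff) for the distributive lattice $K(F)$. For the unit $X\to\text{pt}(\Omega(X))$, one uses that every spectral space is sober, so each point of $X$ corresponds uniquely to an irreducible closed set and hence to a frame map $\Omega(X)\to\{0,1\}$. Naturality in both cases is straightforward.
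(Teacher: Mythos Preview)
The paper does not prove this theorem: it is stated as a known result with a citation to Joyal \cite{joyal:specsp} (and the underlying material is also in \cite{johnstone:stonespaces}*{\S II.3}), so there is no proof in the paper to compare against. Your outline is a correct sketch of the standard argument, reducing the equivalence to Stone duality for bounded distributive lattices via the finite-element functor $F \mapsto K(F)$ and its inverse $L \mapsto \mathrm{Idl}(L)$, and invoking the prime ideal theorem for spatiality and sobriety for the other direction; this is essentially the route taken in \cite{johnstone:stonespaces}. Since the paper treats the result as background, there is nothing further to reconcile.
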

	For a spectral space $X$, Hochster \cite{MR251026} considered a new topology on $X$ by taking as basic open subsets the closed sets with quasi-compact complements. The space so obtained is called the {\it Hochster dual} of $X$ and it is denoted by $X^{\vee}$. He showed that the Hochster dual $X^\vee$ of any spectral space $X$ is also a spectral space and that ${X^\vee}^\vee = X$. Motivated by this, the Hochster dual of a coherent frame is defined as follows:
	\begin{defn}[\cite{kopi:hochdual}, Definition 1.2.4]
The Hochster dual of a coherent frame $F$ is its join completion. 
	\end{defn}
We recall that an ideal of a frame (in general for any lattice) is a down-set, closed under finite joins. An ideal $\mathcal{I}$ of a frame $F$ is called {\it prime} if $1\notin \mathcal{I}$ and if $a\wedge b\in \mathcal{I}$ implies either $a\in \mathcal{I}$ or $b\in \mathcal{I}$. The points of a frame $F$ correspond bijectively to prime ideals of $F$. Indeed, any point $x:F\longrightarrow \{0,1\}$ corresponds to the prime ideal $x^{-1}(0)$. Moreover, in any frame, every prime ideal $\mathcal{P}$ is principal and the generating element is $u_\mathcal{P}:=\underset{b\in \mathcal{P}}{\vee}b$. We have
$$\mathcal{P}=(u_\mathcal{P})=\{b\in F~|~b\leq u_\mathcal{P}\}$$
The generating element $u_\mathcal{P}$ of a prime ideal $\mathcal{P}$ is called a {\it prime element}. Therefore, we have the following natural bijections
$$\text{points}\leftrightarrow \text{prime ideals} \leftrightarrow \text{prime elements}$$
Let $(\mathcal{T}, \otimes, \mathbf{1})$ be a (commutative) tensor triangulated category.
We recall the definition of support on $(\mathcal{T}, \otimes, \mathbf{1})$
from \cite{kopi:hochdual}*{\S  3.2}:
\begin{defn} \label{def:kpsupport}
A support on $(\mathcal{T}, \otimes, \mathbf{1})$ is a pair $(F,d)$ where $F$ is a frame and $d:Ob(\mathcal{T})\longrightarrow F$ is a map satisfying
\begin{itemize}
	\item[(1)] $d(0)=0$ and $d(\mathbf{1})=1$,
	\item[(2)]$d(\sum a)= d(a), \quad\forall a \in Ob(\mathcal{T})$
	\item[(3)]$d(a\oplus b)=d(a)\vee d(b), \quad\forall a,b\in \mathcal{T}$
	\item[(4)] $d(a\otimes b)=d(a)\wedge d(b), \quad\forall a,b\in \mathcal{T}$
	\item[(5)] if $a\longrightarrow b\longrightarrow c \longrightarrow \sum a$ is a triangle in $\mathcal{T}$, then $d(b)\leq d(a)\vee d(c)$.
\end{itemize}
A morphism of supports from $(F,d)$ to $(F',d')$ is a frame map $F\longrightarrow F'$ compatible with the maps $d$ and $d'$.
\end{defn}

\section{Frames, Hochster duality and noncommutative tensor triangulated category}

{\bf Assumption : All primes of $\mathbf{K}$ are complete primes. }

One has a vast repertoire of examples where this assumption holds, and a
detailed description of the current knowledge about this can be found in the
introduction of \cite{nvy:supptpp}

\begin{defn}
	Let $S$ be a set of objects in a noncommutative tensor triangulated category $\mathbf{K}$. We define $G(S)$ to be the set of objects of $\mathbf{K}$ which are of the following forms:
	\begin{itemize}
		\item[(1)] an iterated suspension or desuspension of an object in $S$,
		\item[(2)] or a finite sum of objects in $S$,
		\item[(3)] or objects of the form $s\otimes t$ and $t\otimes s$ with $s\in S$ and $t\in \mathbf{K}$,
		\item[(4)] or an extension of two objects in $S$,
		\item[(5)] or a direct summand of an object in $S$.
	\end{itemize}
If $\mathbf{I}$ is a thick tensor ideal containing $S$, then clearly $G(S)\subseteq \mathbf{I}$. Hence, by induction, $G^\omega(S):= \bigcup_{n\in\mathbb{N}}G^n(S)\subseteq \mathbf{I}$. It may be easily verified that $G^\omega(S)$ is itself a thick tensor ideal and therefore it is the smallest thick tensor ideal containing $S$. We will denote it by $\langle S \rangle$.
\end{defn}
Recall that the radical of an ideal of a noncommutative ring is defined as the intersection of all the prime ideals containing it. In the same spirit, we give the following definition.
\begin{defn}
	We define the \emph{radical closure} of a thick tensor ideal $\mathbf{I}$ of a noncommutative tensor triangulated category $\mathbf{K}$ by
	\begin{equation*}
	\sqrt{\mathbf{I}}	:=\underset{\mathbf{I}\subseteq \mathbf{P}} {\bigcap}\mathbf{P}
	\end{equation*}
where $\mathbf{P}$ denotes the prime ideals of $\mathbf{K}$. If $\mathbf{I}$ is a thick tensor ideal such that $\mathbf{I}= \sqrt{\mathbf{I}}$, we call $\mathbf{I}$ radical.
\end{defn}

Clearly, any prime ideal is radical. It is also clear that if $\mathbf{I}$ is a thick tensor ideal, then $\sqrt{\mathbf{I}}$ is a radical thick tensor ideal. For any set of objects $S$, let $\sqrt{S}$ denote the radical
of the thick tensor ideal $\langle S \rangle$.

\begin{lem}\label{prod}
	Let $\mathbf{I}$ and $\mathbf{J}$ be two thick tensor ideals and let $S$ be a set of objects of $\mathbf{K}$. Then, $\{t\otimes s~|~t\in \mathbf{I}, s\in S\}\subseteq \mathbf{J}$ implies $\mathbf{I}\otimes \langle S \rangle \subseteq \mathbf{J}$.
\end{lem}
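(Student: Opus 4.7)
The plan is to unfold $\langle S\rangle = \bigcup_{n\in\mathbb{N}} G^n(S)$ and prove by induction on $n$ the strengthened statement that \emph{for every $t\in\mathbf{I}$ and every $u\in G^n(S)$, the product $t\otimes u$ lies in $\mathbf{J}$}. Quantifying over all $t\in\mathbf{I}$ rather than a single fixed $t$ will be essential, for reasons that become clear in the one non-trivial case below. The base case $n=0$ is exactly the hypothesis $\{t\otimes s \mid t\in\mathbf{I},\, s\in S\}\subseteq\mathbf{J}$.

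For the inductive step I would walk through the five constructions in the definition of $G$ and combine biexactness of $\otimes$ with the fact that $\mathbf{J}$ is a thick tensor ideal. A suspension or desuspension, a finite sum, or a direct summand occurring in the right slot of $t\otimes(-)$ is converted by biexactness into the corresponding operation applied to objects $t\otimes u$ already known to lie in $\mathbf{J}$, and each such class is absorbed because $\mathbf{J}$ is a thick triangulated subcategory. For an extension, biexactness sends the defining distinguished triangle to a distinguished triangle with two vertices in $\mathbf{J}$, so the third vertex lies in $\mathbf{J}$ as well.

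The mildly delicate case is (3), where the new object has the form $u\otimes v$ or $v\otimes u$ with $u\in G^n(S)$ and arbitrary $v\in\mathbf{K}$. If the new factor is on the right, associativity gives $t\otimes(u\otimes v)=(t\otimes u)\otimes v$, and $t\otimes u\in\mathbf{J}$ by induction, so the extra $v$ is absorbed by $\mathbf{J}$. If the new factor is on the left, I would rewrite $t\otimes(v\otimes u)=(t\otimes v)\otimes u$, use that $\mathbf{I}$ is a thick tensor ideal to conclude $t\otimes v\in\mathbf{I}$, and then apply the inductive hypothesis with the \emph{new} ideal element $t\otimes v$ in place of $t$ to obtain $(t\otimes v)\otimes u\in\mathbf{J}$. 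This is the one point where noncommutativity bites: one cannot slide $t$ past $v$, so instead one exploits stability of $\mathbf{I}$ under right tensoring and re-invokes the induction with a different element of $\mathbf{I}$. This is exactly why the induction hypothesis must be uniform over $\mathbf{I}$, and it is the only real obstacle in the argument; once this case is dispatched, the conclusion $\mathbf{I}\otimes\langle S\rangle\subseteq\mathbf{J}$ is immediate.
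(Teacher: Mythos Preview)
Your proof is correct and follows essentially the same route as the paper: induction on $n$ in $\langle S\rangle=\bigcup_n G^n(S)$, checking each of the five closure operations in $G$ using biexactness of $\otimes$ and the thick tensor ideal properties of $\mathbf{J}$. Your treatment of case~(3) is in fact more explicit than the paper's, which simply says the case ``clearly holds since $\mathbf{I}$ and $\mathbf{J}$ are ideals''; your observation that the subcase $v\otimes u$ forces the induction hypothesis to be uniform over all $t\in\mathbf{I}$ (via $t\otimes v\in\mathbf{I}$) is exactly the mechanism implicit in that remark.
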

\begin{proof}
By definition, $\langle S \rangle = \bigcup_{n\in\mathbb{N}}G^n(S)$ and $G^0(S)= S$. So, by assumption, $\mathbf{I}\otimes G^0(S)\subseteq \mathbf{J}$. Suppose $\mathbf{I}\otimes G^m(S)\subseteq \mathbf{J}$ for some $0\neq m\in \mathbb{N}$. We will now show that $\mathbf{I}\otimes G^{m+1}(S)\subseteq \mathbf{J}$ i.e., 
\begin{equation}\label{subset}
	 \{t\otimes x~|~t\in \mathbf{I}\}\subseteq \mathbf{J}
\end{equation}
 for any $x\in  G^{m+1}(S)$. Obviously, \eqref{subset} is satisfied if $x$ is a finite sum of objects in $G^{m}(S)$. If $x$ is an iterated suspension or desuspension of an object in $G^{m}(S)$ or if $x$ is an extension of two objects in $G^{m}(S)$, then \eqref{subset} holds since $\otimes$ is biexact. If $x$ is a direct summand of an object in  $G^{m}(S)$, then \eqref{subset} holds since $\mathbf{J}$ is thick. If $x$ is of the form $s\otimes k$ or $k\otimes s$ for any $s\in G^{m}(S)$ and $k\in \mathbf{K}$, then clearly  \eqref{subset} holds since $\mathbf{I}$ and $\mathbf{J}$ are ideals. Thus, by induction, we obtain $\mathbf{I}\otimes G^n(S)\subseteq \mathbf{J}$ for all $n\in \mathbb{N}$. It follows that $\mathbf{I}\otimes \langle S \rangle \subseteq \mathbf{J}$.
\end{proof}

\begin{thm}\label{radical}
	Let $\mathbf{I}$ be a thick tensor ideal of $\mathbf{K}$. Then,  $\sqrt{\mathbf{I}}= \langle \{k\in \mathbf{K}~|~k^{\otimes n}\in \mathbf{I}\text{~for some~} n\in \mathbb{N}\} \rangle$.
	
\end{thm}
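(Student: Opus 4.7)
The plan is to prove the equality by double inclusion, writing $J := \{k \in Ob(\mathbf{K}) \mid k^{\otimes n} \in \mathbf{I} \text{ for some } n \geq 1\}$ so that the target reads $\sqrt{\mathbf{I}} = \langle J \rangle$. The inclusion $\langle J \rangle \subseteq \sqrt{\mathbf{I}}$ is the easy direction and relies on the standing assumption that every prime is completely prime: if $k^{\otimes n} \in \mathbf{I} \subseteq \mathbf{P}$ for a prime $\mathbf{P}$, a short induction on $n$ using complete primeness forces $k \in \mathbf{P}$. Hence $J \subseteq \mathbf{P}$ for every prime $\mathbf{P}$ containing $\mathbf{I}$, so $J \subseteq \sqrt{\mathbf{I}}$, and since $\sqrt{\mathbf{I}}$ is a thick tensor ideal, $\langle J \rangle \subseteq \sqrt{\mathbf{I}}$.

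For the reverse inclusion $\sqrt{\mathbf{I}} \subseteq \langle J \rangle$, I would argue by contrapositive. Assume $a \in Ob(\mathbf{K})$ with $a \notin \langle J \rangle$; in particular $a \notin J$, so $a^{\otimes n} \notin \mathbf{I}$ for every $n \geq 1$. Let $\mathcal{F}$ be the family of thick tensor ideals $\mathbf{L}$ with $\mathbf{I} \subseteq \mathbf{L}$ and $a^{\otimes n} \notin \mathbf{L}$ for every $n$. Directed unions of thick tensor ideals are again thick tensor ideals and still avoid the powers $a^{\otimes n}$, so Zorn's lemma yields a maximal element $\mathbf{P} \in \mathcal{F}$. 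Since $a = a^{\otimes 1} \notin \mathbf{P}$ while $\mathbf{I} \subseteq \mathbf{P}$, once $\mathbf{P}$ is shown to be prime we conclude $a \notin \sqrt{\mathbf{I}}$, finishing the proof.

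To check primality, suppose for contradiction that $\mathbf{J}_1 \otimes \mathbf{J}_2 \subseteq \mathbf{P}$ with neither $\mathbf{J}_i$ contained in $\mathbf{P}$, and pick $x \in \mathbf{J}_1 \setminus \mathbf{P}$, $y \in \mathbf{J}_2 \setminus \mathbf{P}$. Then $\mathbf{L}_x := \langle \mathbf{P} \cup \{x\} \rangle$ and $\mathbf{L}_y := \langle \mathbf{P} \cup \{y\} \rangle$ strictly contain $\mathbf{P}$, so by maximality there exist $m, n \geq 1$ with $a^{\otimes m} \in \mathbf{L}_x$ and $a^{\otimes n} \in \mathbf{L}_y$. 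The crux is to show $\mathbf{L}_x \otimes \mathbf{L}_y \subseteq \mathbf{P}$: granted this, associativity of the monoidal structure yields
\[
a^{\otimes(m+n)} \cong a^{\otimes m} \otimes a^{\otimes n} \in \mathbf{L}_x \otimes \mathbf{L}_y \subseteq \mathbf{P},
\]
contradicting $\mathbf{P} \in \mathcal{F}$. I would establish the containment by iterated use of Lemma \ref{prod}: applying it with $\mathbf{I} = \mathbf{L}_x$, $S = \mathbf{P} \cup \{y\}$, $\mathbf{J} = \mathbf{P}$ reduces the problem to showing $u \otimes y \in \mathbf{P}$ for every $u \in \mathbf{L}_x$ (the case $u \otimes p \in \mathbf{P}$ being automatic from $\mathbf{P}$ being an ideal), and the left-handed mirror of Lemma \ref{prod} — valid by the same inductive proof, since the constructor $G(S)$ is built symmetrically in $\otimes$ — further reduces this to $p \otimes y \in \mathbf{P}$ (automatic) and $x \otimes y \in \mathbf{P}$ (which holds since $x \otimes y \in \mathbf{J}_1 \otimes \mathbf{J}_2 \subseteq \mathbf{P}$). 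The main obstacle is precisely this two-sided expansion step: noncommutativity prevents expanding $\mathbf{L}_x$ and $\mathbf{L}_y$ simultaneously, so we must combine Lemma \ref{prod} with its left-right mirror, and everything else is bookkeeping.
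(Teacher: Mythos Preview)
Your argument follows the paper's strategy almost verbatim: the Zorn's-lemma construction of a maximal ideal avoiding the tensor powers of $a$, followed by a primality check that hinges on Lemma~\ref{prod} together with its left-handed mirror. The only structural difference is that the paper verifies \emph{complete} primeness (starting from $k\otimes k'\in\mathbf{M}$ for objects $k,k'$), whereas you verify primeness in the ideal sense (starting from $\mathbf{J}_1\otimes\mathbf{J}_2\subseteq\mathbf{P}$) and then pick witnesses $x,y$.

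One technical point worth tightening: the mirror of Lemma~\ref{prod} requires the \emph{fixed} factor to be a thick tensor ideal, since the inductive step for a generator of the form $s\otimes r$ needs $r\otimes(-)$ to land back in that factor. Applying the mirror with the single object $y$ on the right, as you do, does not literally give $(\mathbf{P}\cup\{x\})\otimes y\subseteq\mathbf{P}\Rightarrow\mathbf{L}_x\otimes y\subseteq\mathbf{P}$; the step $u=s\otimes r$ would produce $s\otimes(r\otimes y)$, and $r\otimes y$ need not equal $y$. The fix is immediate in your setup: apply the mirror with the ideal $\mathbf{J}_2$ in place of $y$, using $x\in\mathbf{J}_1$ to get $\{x\}\otimes\mathbf{J}_2\subseteq\mathbf{J}_1\otimes\mathbf{J}_2\subseteq\mathbf{P}$, and then specialize to $y\in\mathbf{J}_2$ afterward. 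The paper is equally terse at the analogous spot, and your ideal-theoretic framing is in fact better suited to making this step rigorous.
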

\begin{proof}
	Let $S:= \{k\in \mathbf{K}~|~k^{\otimes n}\in \mathbf{I}\text{~for some~} n\in \mathbb{N}\} $. Clearly, $S\subseteq \mathbf{P}$ for all prime ideals of $\mathbf{K}$ such that $\mathbf{P} \supseteq \mathbf{I}$. Hence, $\langle S\rangle \subseteq \sqrt{\mathbf{I}}$. 
	Given $t\notin \langle S\rangle $, consider the collection $\Omega$ of all ideals $\mathbf{J}\supseteq \mathbf{I}$ such that $\mathbf{J}\cap \{t^{\otimes n}~|~n\in \mathbb{N}\}= \emptyset$. Clearly, $\mathbf{I}\in \Omega$ and the set $\Omega$ can be partially ordered by inclusion and any chain in $\Omega$ has an upper bound in $\Omega$. Therefore, by Zorn's Lemma there exists a maximal element, say $\mathbf{M}$, in $\Omega$. Thus, $\mathbf{M}\supseteq \mathbf{I}$ and $\mathbf{M}\cap \{t^{\otimes n}~|~n\in \mathbb{N}\}= \emptyset$. It is now enough to show that $\mathbf{M}$ is prime to prove that $t\notin \sqrt{\mathbf{I}}$. \\
	Let $k,k'\in \mathbf{K}$ be such that $k\otimes k'\in \mathbf{M}$. It may be easily verified using Lemma \ref{prod} that $ \langle \mathbf{M},k\rangle \otimes k'\subseteq \mathbf{M}$ and obviously we have $ \langle \mathbf{M},k\rangle \mathbf{M}\subseteq \mathbf{M}$. Therefore, again applying Lemma \ref{prod} we obtain $ \langle \mathbf{M},k\rangle \langle \mathbf{M},k'\rangle\subseteq \mathbf{M}$. Suppose, if possible, $k,k'\notin \mathbf{M}$. Since $\mathbf{M}$ is maximal, we must have $t^{\otimes n}\in \langle \mathbf{M},k\rangle $ and $t^{\otimes m}\in \langle \mathbf{M},k'\rangle$ for some $n,m \in \mathbb{N}$. This implies $t^{\otimes (n+m)}\in  \langle \mathbf{M},k\rangle \langle \mathbf{M},k'\rangle\subseteq \mathbf{M}$ which gives the required contradiction. 
\end{proof}
	
\begin{lem}\label{frame}
	Let ${\bf{Rad}}_{\mathbf{K}}$ denote the poset of radical ideals of a noncommutative tensor triangulated category $\mathbf{K}$ satisfying {\bf Assumption}. Then, ${\bf{Rad}}_{\mathbf{K}}$ is a frame with the following meet and join operations:
	\begin{equation*}
		\mathbf{I}_1\bigwedge\mathbf{I}_2 := \mathbf{I}_1\bigcap\mathbf{I}_2
	\end{equation*}
\begin{equation*}
	\bigvee_{j\in J}\mathbf{I}_j:= \sqrt{\bigcup_{j\in J}\mathbf{I}_j} \qquad 
\end{equation*}
for any two radical thick tensor ideals  $\mathbf{I}_1$ and $\mathbf{I}_2$ and for any set of radical thick tensor ideals $\{{{\mathbf{I}}_j}\}_{j\in J}$. 
\end{lem}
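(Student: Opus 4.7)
The plan is to verify the two ingredients of being a frame: (i) that the stated operations exhibit $\mathbf{Rad}_{\mathbf{K}}$ as a complete lattice, and (ii) that the infinite distributive law $\mathbf{I}\wedge\bigvee_{j\in J}\mathbf{I}_j=\bigvee_{j\in J}(\mathbf{I}\wedge\mathbf{I}_j)$ is satisfied.

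For (i), the key observation is that an arbitrary intersection of radical thick tensor ideals is again radical. Indeed, $\bigcap_\alpha\mathbf{I}_\alpha$ is automatically a thick tensor ideal, and if each $\mathbf{I}_\alpha$ is radical then $\sqrt{\bigcap_\alpha\mathbf{I}_\alpha}\subseteq\sqrt{\mathbf{I}_\alpha}=\mathbf{I}_\alpha$ for every $\alpha$, forcing $\sqrt{\bigcap_\alpha\mathbf{I}_\alpha}=\bigcap_\alpha\mathbf{I}_\alpha$. Hence $\bigcap$ furnishes the meet. On the other hand, $\sqrt{\bigcup_j\mathbf{I}_j}$ is clearly a radical thick tensor ideal containing each $\mathbf{I}_j$, and any radical ideal $\mathbf{J}$ with $\mathbf{J}\supseteq\mathbf{I}_j$ for all $j$ must contain $\bigcup_j\mathbf{I}_j$ and, being radical, must also contain $\sqrt{\bigcup_j\mathbf{I}_j}$; so this is the least upper bound.

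For (ii), the inclusion $\bigvee_j(\mathbf{I}\wedge\mathbf{I}_j)\subseteq\mathbf{I}\wedge\bigvee_j\mathbf{I}_j$ is immediate: each $\mathbf{I}\cap\mathbf{I}_j$ sits inside both $\mathbf{I}$ and $\sqrt{\bigcup_j\mathbf{I}_j}$, so $\bigcup_j(\mathbf{I}\cap\mathbf{I}_j)\subseteq\mathbf{I}\cap\sqrt{\bigcup_j\mathbf{I}_j}$, the right hand side is radical by (i), and taking radicals preserves the inclusion. The opposite inclusion is the substantive one, and I will argue it prime by prime using $\sqrt{\mathbf{J}}=\bigcap_{\mathbf{J}\subseteq\mathbf{P}}\mathbf{P}$. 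Fix $k\in\mathbf{I}\cap\sqrt{\bigcup_j\mathbf{I}_j}$ and let $\mathbf{P}$ be any prime containing $\bigcup_j(\mathbf{I}\cap\mathbf{I}_j)$. Because $\mathbf{I}$ and each $\mathbf{I}_j$ are two-sided tensor ideals, one has $\mathbf{I}\otimes\mathbf{I}_j\subseteq\mathbf{I}\cap\mathbf{I}_j\subseteq\mathbf{P}$, and primality of $\mathbf{P}$ yields, for every $j$, the dichotomy $\mathbf{I}\subseteq\mathbf{P}$ or $\mathbf{I}_j\subseteq\mathbf{P}$. If $\mathbf{I}\subseteq\mathbf{P}$, then $k\in\mathbf{I}\subseteq\mathbf{P}$; otherwise $\mathbf{I}_j\subseteq\mathbf{P}$ must hold for every $j$, whence $\bigcup_j\mathbf{I}_j\subseteq\mathbf{P}$ and $k\in\sqrt{\bigcup_j\mathbf{I}_j}\subseteq\mathbf{P}$ since primes are radical. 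In either case $k\in\mathbf{P}$, establishing $k\in\sqrt{\bigcup_j(\mathbf{I}\cap\mathbf{I}_j)}$.

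The main obstacle is precisely this last direction: the argument turns on the crucial auxiliary containment $\mathbf{I}\otimes\mathbf{I}_j\subseteq\mathbf{I}\cap\mathbf{I}_j$ together with ideal-level primality of $\mathbf{P}$, which together convert a single hypothesis on $\mathbf{P}\supseteq\bigcup_j(\mathbf{I}\cap\mathbf{I}_j)$ into a uniform dichotomy across all $j$. The standing assumption that every prime of $\mathbf{K}$ is completely prime does not appear explicitly in this computation, but is used indirectly via Theorem \ref{radical} to ensure that the radical has the expected object-level description and that the supply of primes is rich enough for $\sqrt{\cdot}=\bigcap_{\mathbf{P}\supseteq\cdot}\mathbf{P}$ to behave well.
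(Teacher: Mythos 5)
Your proof is correct and takes a genuinely different route from the paper's. For the nontrivial inclusion $\mathbf{I}\cap\bigvee_j\mathbf{I}_j\subseteq\bigvee_j(\mathbf{I}\cap\mathbf{I}_j)$, the paper fixes $x$ in the left-hand side and constructs an auxiliary subcategory $C_x=\{k\in\bigvee_j\mathbf{I}_j\mid x\otimes s\otimes k,\ k\otimes s\otimes x\in\bigvee_j(\mathbf{I}\cap\mathbf{I}_j)\ \forall s\}$, shows it is a radical thick tensor ideal containing each $\mathbf{I}_j$, hence equals $\bigvee_j\mathbf{I}_j$, and then from $x\otimes x\in\bigvee_j(\mathbf{I}\cap\mathbf{I}_j)$ invokes \textbf{Assumption} (complete primality) to deduce $x\in\bigvee_j(\mathbf{I}\cap\mathbf{I}_j)$. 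You instead argue prime by prime, which is cleaner: for a prime $\mathbf{P}\supseteq\bigcup_j(\mathbf{I}\cap\mathbf{I}_j)$, the ideal containment $\mathbf{I}\otimes\mathbf{I}_j\subseteq\mathbf{I}\cap\mathbf{I}_j\subseteq\mathbf{P}$ plus (ordinary, ideal-level) primality forces $\mathbf{I}\subseteq\mathbf{P}$ or $\mathbf{I}_j\subseteq\mathbf{P}$ for each $j$, and the two cases collapse immediately. Your approach is shorter and avoids the construction of $C_x$ entirely. One remark you make at the end is off, however: your computation does not in fact depend on \textbf{Assumption}, even indirectly. You never invoke Theorem~\ref{radical}'s object-level description of the radical (you only use $\sqrt{\mathbf{I}}=\bigcap_{\mathbf{I}\subseteq\mathbf{P}}\mathbf{P}$, which is the \emph{definition}), and the Zorn-type argument that supplies enough primes, embedded in the proof of Theorem~\ref{radical}, also does not require \textbf{Assumption} (the maximal ideal $\mathbf{M}$ it produces is shown to be completely prime, hence prime, unconditionally). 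So your argument actually establishes the frame property without the standing hypothesis that all primes are complete, which is a stronger statement than the paper proves.
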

\begin{proof}
	By definition, $\bigvee_{j\in J}\mathbf{I}_j$ is a radical thick tensor ideal. Also, given a set of radical thick tensor ideals $\{{{\mathbf{I}}_i}\}_{i\in I}$, we have
	$$ \mathbf{I}_i= \sqrt{\mathbf{I}_i}=\bigcap_{\mathbf{I} _i\subseteq \mathbf{P}}\mathbf{P}\supseteq \underset{\bigcap_{i\in I}\mathbf{I}_i\subseteq \mathbf{P}}{\bigcap}\mathbf{P}=  \sqrt{\bigcap_{i\in I}\mathbf{I}_i}$$
	for every $i\in I$. Thus, we have $\bigcap_{i\in I}\mathbf{I}_i=\sqrt{\bigcap_{i\in I}\mathbf{I}_i}$. Hence, ${\bf{Rad}}_{\mathbf{K}}$ is a complete lattice. Let us now verify that $$\bigvee_{i\in I}(\mathbf{J}\bigcap \mathbf{I}_i)= \mathbf{J}\bigcap (\bigvee_{i\in I}\mathbf{I}_i)$$
	We clearly have $\bigvee_{i\in I}(\mathbf{J}\bigcap \mathbf{I}_i)\subseteq \mathbf{J}\bigcap (\bigvee_{i\in I}\mathbf{I}_i)$. Now, let $x\in \mathbf{J}\bigcap (\bigvee_{i\in I}\mathbf{I}_i)$. We define
	$$C_x:= \{k\in \bigvee_{i\in I} \mathbf{I}_i~|~x\otimes s\otimes  k,~ k\otimes s\otimes x\in \bigvee_{i\in I}(\mathbf{J}\bigcap \mathbf{I}_i)\text{~for all~} s\in \mathbf{K}\}$$
	We claim that $C_x=\bigvee_{i\in I}\mathbf{I}_i$. If our claim holds, then $x\in C_x$ which implies $x\otimes x\in \bigvee_{i\in I}(\mathbf{J}\bigcap \mathbf{I}_i)= \underset{\cup_{i\in I}\mathbf{J}\cap \mathbf{I}_i\subseteq \mathbf{P}}{\bigcap} \mathbf{P}$. Since, by assumption, all primes are complete, we have $x\in \bigvee_{i\in I}(\mathbf{J}\bigcap \mathbf{I}_i)$ and this completes the proof. We will now give a proof of our claim.\\
	First, we show that $C_x$ is a thick subcategory of $\mathbf{K}$. Let $k_1, k_2 \in \mathbf{K}$ be such that $k_1\oplus k_2\in C_x$. Then since $\bigvee_{i\in I}(\mathbf{J}\bigcap \mathbf{I}_i)$ is a thick subcategory, we have
	\begin{align*}
		&(k_1\oplus k_2) \otimes s \otimes x,~ x\otimes s \otimes (k_1\oplus k_2)\in \bigvee_{i\in I}(\mathbf{J}\bigcap \mathbf{I}_i)\\
		&\implies (k_1 \otimes s \otimes x) \oplus ( k_2  \otimes s\otimes x),~ (x \otimes s\otimes k_1)\oplus (x \otimes s\otimes k_2)\in  \bigvee_{i\in I}(\mathbf{J}\bigcap \mathbf{I}_i)\\
		&\implies k_1 \otimes s \otimes x, ~k_2 \otimes s \otimes x, ~x \otimes s\otimes k_1,~x\otimes s \otimes k_2 \in \bigvee_{i\in I}(\mathbf{J}\bigcap \mathbf{I}_i)\implies k_1,~ k_2\in C_x
	\end{align*} 
Next, we show that $C_x$ is a two-sided ideal. Let $k\in C_x$ and let $t\in \mathbf{K}$. Since $x\otimes s\otimes k,~ k\otimes s\otimes x\in  \bigvee_{i\in I}(\mathbf{J}\bigcap \mathbf{I}_i)$ for all $s\in \mathbf{K}$, we have  $x\otimes s\otimes (k\otimes t)=(x\otimes s\otimes k)\otimes t \in  \bigvee_{i\in I}(\mathbf{J}\bigcap \mathbf{I}_i)$ and $(k\otimes t)\otimes s\otimes x= k\otimes (t\otimes s)\otimes x \in  \bigvee_{i\in I}(\mathbf{J}\bigcap \mathbf{I}_i)$. This implies $k\otimes t\in C_x$. Similarly, $t\otimes k\in C_x$. \\
We will now show that $C_x$ is radical. Clearly, $C_x\subseteq \sqrt{C_x}$. Now, if possiblle, let $t\in  \sqrt{C_x}=\bigcap_{C_x\subseteq \mathbf{P}}\mathbf{P}$ be such that $t\notin C_x$. This implies either $x\otimes s\otimes t$ or $t\otimes s\otimes x$ does not belong to $\bigvee_{i\in I}(\mathbf{J}\bigcap \mathbf{I}_i)$ for some $s\in \mathbf{K}$. Without loss of generality, suppose $x\otimes s\otimes t\notin \bigvee_{i\in I}(\mathbf{J}\bigcap \mathbf{I}_i) = \underset{\cup_{i\in I}(\mathbf{J}\cap \mathbf{I}_i)\subseteq \mathbf{P}}{\bigcap} \mathbf{P}$. Then, there exists some prime ideal $\mathbf{P}_0\supseteq \bigcup_{i\in I}(\mathbf{J}\bigcap \mathbf{I}_i)$ such that $x\otimes s\otimes t\notin \mathbf{P}_0$. This implies $x\notin \mathbf{P}_0$ and  $t\notin \mathbf{P}_0$. For $k\in C_x$, we have 
\begin{align*}
x\otimes k,~ k\otimes x\in \bigvee_{i\in I}(\mathbf{J}\bigcap \mathbf{I}_i)\subseteq \mathbf{P}_0
\end{align*}
Hence $k\in \mathbf{P}_0$ showing that $C_x\subseteq \mathbf{P}_0$. Therefore, $t\in \sqrt{C_x}\subseteq \mathbf{P}_0$. This gives the required contradiction. Therefore, $C_x$ is radical. It is clear that $\mathbf{I}_i\subseteq C_x$ for all $i\in I$. Hence, $\bigcup_{i\in I} \mathbf{I}_i \subseteq C_x$ and since $C_x$ is radical, we must have $C_x= \bigvee_{i\in I}\mathbf{I}_i$.

 \end{proof}

\begin{Thm}\label{cohfr}
	The poset of radical thick tensor ideals ${\bf{Rad}}_{\mathbf{K}}$ of a noncommutative tensor triangulated category $\mathbf{K}$ satisfying {\bf Assumption} forms a coherent frame.
\end{Thm}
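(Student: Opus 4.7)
The plan is to identify the compact (finite) elements of $\mathbf{Rad}_{\mathbf{K}}$ as the radicals of principal thick tensor ideals, $\sqrt{\langle a \rangle}$ for $a \in \mathrm{Ob}(\mathbf{K})$, and then verify the three requirements of Definition \ref{def:cohframe}: (a) every radical ideal is a join of finite elements, (b) $1$ is finite, (c) the meet of two finite elements is finite. Lemma \ref{frame} already tells us that $\mathbf{Rad}_{\mathbf{K}}$ is a frame, so only coherence remains.

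For (a), given a radical ideal $\mathbf{I}$, write $\mathbf{I} = \bigvee_{a \in \mathbf{I}} \sqrt{\langle a \rangle}$: each $\sqrt{\langle a \rangle}$ is contained in $\mathbf{I}$ by the radicality of $\mathbf{I}$, and the join on the right clearly contains every $a \in \mathbf{I}$. For (b), the unit $1_{\mathbf{Rad}_{\mathbf{K}}} = \mathbf{K}$ equals $\sqrt{\langle \mathbf{1} \rangle}$ since $\mathbf{1} \otimes k \cong k$ for every $k$, so it suffices to know $\sqrt{\langle a \rangle}$ is always compact.

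The core work is then the compactness of $\sqrt{\langle a \rangle}$. Suppose $\sqrt{\langle a \rangle} \subseteq \bigvee_{j \in J} \mathbf{I}_j = \sqrt{\langle \bigcup_j \mathbf{I}_j \rangle}$. Then $a \in \sqrt{\langle \bigcup_j \mathbf{I}_j \rangle}$, and by Theorem \ref{radical} this means $a$ lies in $\langle T \rangle = G^{\omega}(T)$, where $T = \{k : k^{\otimes n} \in \langle \bigcup_j \mathbf{I}_j \rangle \text{ for some } n\}$. By the inductive nature of the $G^{\omega}$-construction, $a$ is built in finitely many steps from finitely many generators $t_1, \dots, t_r \in T$. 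Each witnessing power $t_i^{\otimes n_i}$ similarly lies in $\langle \bigcup_j \mathbf{I}_j \rangle$ and is built from finitely many elements drawn from $\bigcup_j \mathbf{I}_j$, each contained in some single $\mathbf{I}_j$. Collecting these finitely many indices yields a finite $F \subseteq J$ with $t_i^{\otimes n_i} \in \langle \bigcup_{j \in F} \mathbf{I}_j \rangle$ for all $i$; hence $t_i \in \sqrt{\langle \bigcup_{j \in F} \mathbf{I}_j \rangle} = \bigvee_{j \in F} \mathbf{I}_j$, and therefore $a$, and then $\sqrt{\langle a \rangle}$, also lies in $\bigvee_{j \in F} \mathbf{I}_j$.

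For (c), I claim $\sqrt{\langle a \rangle} \wedge \sqrt{\langle b \rangle} = \sqrt{\langle a \otimes b \rangle}$, which then gives the general case via the distributive law and the fact that finite joins of compacts are compact. The inclusion $\supseteq$ is immediate from $a \otimes b \in \langle a \rangle \cap \langle b \rangle$. For $\subseteq$, this is the one place the Assumption enters: given any prime $\mathbf{P}$ containing $a \otimes b$, complete primeness forces $a \in \mathbf{P}$ or $b \in \mathbf{P}$, so $\sqrt{\langle a \rangle} \subseteq \mathbf{P}$ or $\sqrt{\langle b \rangle} \subseteq \mathbf{P}$; in either case any $x \in \sqrt{\langle a \rangle} \cap \sqrt{\langle b \rangle}$ lies in $\mathbf{P}$, and intersecting over all such $\mathbf{P}$ yields $x \in \sqrt{\langle a \otimes b \rangle}$. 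The main obstacle is this step (c): without complete primeness one would only obtain $\sqrt{\langle a \rangle} \wedge \sqrt{\langle b \rangle} = \sqrt{\langle \{a \otimes c \otimes b, b \otimes c \otimes a : c \in \mathbf{K}\} \rangle}$, an infinite join of principal radicals that is not obviously compact; the Assumption is precisely what collapses this to a single generator and secures coherence.
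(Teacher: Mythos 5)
Your proof is correct and takes essentially the same route as the paper's: you identify the compact elements with the principal radical ideals $\sqrt{\langle a \rangle}$, and the heart of the argument — the compactness of $\sqrt{\langle a \rangle}$ — uses Proposition~\ref{radical} and the finitary nature of the $G^{\omega}$-construction exactly as the paper does. Where you differ slightly is in organization: the paper proves the full equivalence ``finite $\iff$ principal'' and leaves the verification of the three coherence conditions implicit, while you prove only the direction you need (principal $\implies$ compact) and then check (a), (b), (c) of Definition~\ref{def:cohframe} head-on. Your explicit treatment of (c) via $\sqrt{\langle a \rangle} \wedge \sqrt{\langle b \rangle} = \sqrt{\langle a \otimes b \rangle}$ is, if anything, a welcome clarification: the paper invokes this equality only later (in the proof of Theorem~\ref{thm:initsupp}), so its role in establishing coherence is left tacit there, whereas you correctly flag it as the one place the complete-primeness \textbf{Assumption} enters — without it, the meet of two principal radicals is a priori only an infinite join of principals, and coherence would not follow by this route.
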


\begin{proof}
By Lemma \ref{frame}, we know that ${\bf{Rad}}_{\mathbf{K}}$ forms a frame. It is now enough to check that an element of the frame ${\bf{Rad}}_{\mathbf{K}}$ is finite if and only if  it is a principal radical thick tensor ideal i.e., of the form $\sqrt{k}$ for some $k\in \mathbf{K}$. Let $ \mathbf{I} $ be a finite element of the frame ${\bf{Rad}}_{\mathbf{K}}$. Then, clearly we have
\begin{equation*}
	\mathbf{I} \subseteq \sqrt{\bigcup_{k \in  \mathbf{I} } \sqrt{k}} = \bigvee_{k \in  \mathbf{I} }
	\sqrt{k}.
\end{equation*}
Since $ \mathbf{I} $ is radical, $k \in  \mathbf{I} $ implies $\sqrt{k} \subseteq  \mathbf{I} $. 
Thus, $ \mathbf{I}  = \bigvee_{k \in  \mathbf{I} } \sqrt{k}$.
Since $\mathbf{I}$ is finite, there exists $k_1,
\hdots, k_n \in  \mathbf{I} $ such that $ \mathbf{I}  \subseteq \sqrt{k_1} \vee \hdots \vee
\sqrt{k_n}$. We observe that $\mathbf{I} \subseteq \sqrt{\sqrt{k_1} \cup \dotsb \cup
	\sqrt{k_n}} \subseteq \sqrt{k_1 \oplus \dotsb \oplus k_n} \subseteq  \mathbf{I} $. 
Thus, $\mathbf{I}
= \sqrt{\sqrt{k_1} \cup \dotsb \cup \sqrt{k_n}} = \sqrt{k_1 \oplus \dotsb
	\oplus k_n}$. Therefore, $ \mathbf{I} $ is of the form $\sqrt{k}$.

Conversely, let $ \mathbf{I} = \sqrt{k_0}$ for some $k_0\in \mathbf{K}$. We need to check that $ \mathbf{I}$ is a finite
radical thick tensor ideal. Assume that
\begin{equation*}
	\sqrt{k_0} \subseteq \bigvee_{\lambda \in \Lambda} \mathbf{J}_{\lambda},
\end{equation*} 
where the $\mathbf{J}_{\lambda}$ are radical thick tensor ideals. Then in particular $k_0 \in
\sqrt{\bigcup_{\lambda \in \Lambda} \mathbf{J}_{\lambda}}$. Let us denote
$\bigcup_{\lambda \in \Lambda} \mathbf{J}_{\lambda}$ by $S$. Thus, by Proposition \ref{radical}, 
\begin{equation*}
	k_0 \in \sqrt{\langle S\rangle} = \sqrt{\langle{\bigcup_{\lambda \in \Lambda}
			\mathbf{J}_{\lambda}}\rangle}= \langle \{k\in \mathbf{K}~|~k^{\otimes n}\in \langle{\bigcup_{\lambda \in \Lambda}
		\mathbf{J}_{\lambda}}\rangle\text{~for some~} n\in \mathbb{N}\} \rangle
\end{equation*}
Let the finitely
many elements of $ \{k\in \mathbf{K}~|~k^{\otimes n}\in \langle{\bigcup_{\lambda \in \Lambda}
	\mathbf{J}_{\lambda}}\rangle \text{~for some~} n\in \mathbb{N} \}$ involved in the iterative construction of $k_0$ be $k_1, \hdots, k_r$ i.e.,
\begin{equation}\label{fg}
	k_0\in\langle k_1,\hdots,k_r\rangle \subseteq \sqrt{\langle S\rangle} 
\end{equation}

Then, we have $k_i^{\otimes n_i}\in  \langle{\bigcup_{\lambda \in \Lambda}
	\mathbf{J}_{\lambda}}\rangle$ for some $n_i\in \mathbb{N}$ for each $i=1,\hdots, r$.
Let the finitely
many elements of $\bigcup_{\lambda\in\Lambda} \mathbf{J}_{\lambda}$ involved in the iterative construction of $k_i^{\otimes n_i}, i=1,\hdots,r$ be $x_1,\hdots, x_m$. Suppose $\{x_1, \hdots, x_m\} \subseteq
\mathbf{J}_{\lambda_1} \cup \hdots \cup \mathbf{J}_{\lambda_{\nu}}$ for some $\nu \in \mathbb{N}$.
Thus, for each $ i=1,\hdots,r$, we have\begin{equation*}
	k_i^{\otimes n_i} \in \langle \{x_1,\hdots,x_m\}\rangle \subseteq
	\langle{\mathbf{J}_{\lambda_1} \cup \dotsb \cup \mathbf{J}_{\lambda_r}}\rangle \implies k_i \in \sqrt{\langle \mathbf{J}_{\lambda_1} \cup \dotsb \cup \mathbf{J}_{\lambda_r} \rangle} =
	\mathbf{J}_{\lambda_1} \vee \dotsb \vee \mathbf{J}_{\lambda_r}
\end{equation*}

and hence by \eqref{fg}
\begin{equation*}
	k_0\in\langle k_1,\hdots,k_r\rangle \subseteq \mathbf{J}_{\lambda_1} \vee \dotsb \vee \mathbf{J}_{\lambda_r}
\end{equation*}
Thus, $\sqrt{k_0}\subseteq  \mathbf{J}_{\lambda_1} \vee \dotsb \vee \mathbf{J}_{\lambda_r}$ which proves that $\sqrt{k_0}$ is finite.
\end{proof}

	\begin{defn}\label{supp}
		We call the frame of radical thick tensor ideals of a noncommutative tensor triangulated category $\mathbf{K}$ satisfying {\bf Assumption} the Zariski frame of $\mathbf{K}$ and we denote it by $  {\bf Zar(\mathbf{K})}$. By the Zariski spectrum of $\mathbf{K}$ we mean the spectral space associated to ${\bf Zar(\mathbf{K})}$ and we denote it by $Spec_{Zar}(\mathbf{K})$.
	\end{defn}
Next, we introduce a notion of support for a noncommutative tensor triangulated category.

\begin{defn} \label{def:noncommfts}
	A support on $\mathbf{K}$ is a pair $(F,d) $ where $F$ is a frame and $d: Ob(\mathbf{K})\longrightarrow F$ is a map satisfying:
	\begin{itemize}
		\item[(1)] $d(0)=0$ and $d(\mathbf{1})=1$
		\item[(2)] $d(\sum k) = d(k)~~\forall k\in \mathbf{K}$
		\item[(3)] $d(k\oplus t) = d(k)\vee d(t)~~ \forall k,t \in \mathbf{K}$
		\item[(4)] $d(k\otimes t) = d(k)\wedge d(t)= d(t\otimes k)~~ \forall k,t \in \mathbf{K}$
		\item[(5)] If $k\longrightarrow t\longrightarrow r\longrightarrow \sum k$ is a triangle in $\mathbf{K}$, then $d(t)\leq d(k) \vee d(r)$
	\end{itemize}
A
morphism $\varphi \colon (F, d) \longrightarrow (F', d')$ is a morphism of
frames $\varphi \colon F \longrightarrow F'$ such that $d'(a) =
\varphi(d(a))$ for all $a \in Ob(\mathbf{K})$.
\end{defn}

\begin{rem}
  This definition of support is motivated by the one in \cite{kopi:hochdual}, recalled
  here in definition \ref{def:kpsupport}. We shall see that under \textbf{Assumption} this
  continues to correspond to a notion of support on
  noncommutative tensor triangulated categories as described by Nakano, Vashaw and
  Yakimov.
\end{rem}




\begin{Thm} \label{thm:initsupp}
	Let $\mathbf{K}$ be a noncommutative tensor triangulated category satisfying {\bf Assumption}. Then the assignment $s: Ob(\mathbf{K})\longrightarrow  {\bf Zar(\mathbf{K})}, ~~k\mapsto \sqrt{k}$ is a support. Moreover, it is initial among all supports.
\end{Thm}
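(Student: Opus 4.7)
The plan is to split the argument into two halves: first, verify that $s$ satisfies the five axioms of Definition \ref{def:noncommfts}; second, construct the unique morphism of supports into any other support.

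For the first half, axioms (1), (2), (3), (5) follow directly from inspection of $\langle k \rangle$. The ideal $\langle 0 \rangle$ is zero while $\mathbf{1}$ generates all of $\mathbf{K}$; $\langle k \rangle$ absorbs suspensions and direct summands by definition of $G$; and for a triangle $k \to t \to r \to \Sigma k$, any prime containing $\sqrt{k} \cup \sqrt{r}$ contains both $k$ and $r$, hence $t$ by closure under extensions, which gives $\sqrt{t} \subseteq \sqrt{k} \vee \sqrt{r}$. The nontrivial axiom is (4), and this is where \textbf{Assumption} enters. For $\sqrt{k \otimes t} \subseteq \sqrt{k} \cap \sqrt{t}$, any prime containing $k$ or $t$ automatically contains $k \otimes t$, so the family defining $\sqrt{k \otimes t}$ is larger and its intersection is smaller. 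For the reverse inclusion, any prime $\mathbf{P}$ containing $k \otimes t$ is a complete prime by \textbf{Assumption}, hence contains $k$ or $t$, and in either case $\mathbf{P} \supseteq \sqrt{k} \cap \sqrt{t}$. The symmetric equality $\sqrt{k \otimes t} = \sqrt{t \otimes k}$ follows from the same complete-primality argument.

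For initiality, given a support $(F', d')$, define
\begin{equation*}
\varphi \colon {\bf Zar(\mathbf{K})} \longrightarrow F', \qquad \varphi(\mathbf{I}) = \bigvee_{k \in \mathbf{I}} d'(k).
\end{equation*}
Compatibility with $d'$ amounts to the identity $\varphi(\sqrt{k}) = d'(k)$; the $\geq$ direction is obvious from $k \in \sqrt{k}$, and the reverse reduces to the monotonicity claim that for any set $S$, if $a \in \sqrt{\langle S \rangle}$ then $d'(a) \leq \bigvee_{s \in S} d'(s)$. I would prove this in two stages. First, a direct induction on the construction $\langle S \rangle = \bigcup_n G^n(S)$ establishes it for $a \in \langle S \rangle$, because each generation step in $G$ is termwise controlled by an axiom of $d'$ (suspension by (2), direct sums and summands by (3), tensor with an arbitrary object by (4), extensions by (5)). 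Second, for $a \in \sqrt{\langle S \rangle}$, Theorem \ref{radical} writes $a$ as built from elements $b$ with $b^{\otimes n} \in \langle S \rangle$; iterating axiom (4) gives $d'(b) = d'(b^{\otimes n}) \leq \bigvee_{s \in S} d'(s)$, and the first stage applied to this new generating set closes the argument.

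Once the identity $\varphi(\sqrt{k}) = d'(k)$ is in place, checking that $\varphi$ is a frame map is straightforward. Preservation of $0$ and $1$ comes from axiom (1). Arbitrary joins are preserved because the frame join is $\sqrt{\bigcup \mathbf{I}_j}$ and the monotonicity claim translates the radical inside $\varphi$. Binary meets are preserved using that $k \otimes l \in \mathbf{I} \cap \mathbf{J}$ whenever $k \in \mathbf{I}$ and $l \in \mathbf{J}$, together with $d'(k \otimes l) = d'(k) \wedge d'(l)$ from axiom (4). Uniqueness is immediate from Theorem \ref{cohfr}: every radical ideal is the join of its principal sub-ideals $\sqrt{k}$, which are exactly the finite elements, so the values of $\varphi$ on these are forced by compatibility with $d'$, and a frame map preserving arbitrary joins is determined by its restriction to finite elements. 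I expect the main obstacle to be the monotonicity lemma of the middle paragraph, since it is the only step where the generation process $G^n$ and the power-reduction defining the radical must be threaded together with all the axioms of $d'$; the remaining verifications are bookkeeping in the lattice structure of ${\bf Zar(\mathbf{K})}$.
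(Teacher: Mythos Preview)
Your proof is correct and rests on the same underlying ideas as the paper's, but the two are organized differently in the initiality half. The paper defines the comparison map only on finite elements, via $u(\sqrt{k}) = d(k)$, and checks well-definedness by introducing the auxiliary ideal $I(t) = \{r \in \mathbf{K} : d(r) \leq d(t)\}$ and observing (using the support axioms together with Proposition~\ref{radical}) that it is a radical thick tensor ideal containing $t$, hence contains $\sqrt{t}$; the extension to all of ${\bf Zar(\mathbf{K})}$ and the frame-map properties are then left implicit in coherence. You instead define $\varphi$ globally by the join formula and prove your monotonicity lemma by explicit induction on the $G^n$ construction followed by the power-reduction step from Proposition~\ref{radical}. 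The technical content is the same---your monotonicity lemma with $S = \{t\}$ is precisely the inclusion $\sqrt{t} \subseteq I(t)$---but your packaging has the advantage that the frame-map axioms (in particular preservation of binary meets, via $d'(k \otimes l) = d'(k) \wedge d'(l)$ and the distributive law in $F'$) are verified explicitly rather than left to the reader, while the paper's $I(t)$ trick is a slicker way to phrase the well-definedness step alone.
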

\begin{proof}
We have  $\sqrt{\mathbf{1}}=\underset{\mathbf{1}\in \mathbf{P}}{\bigcap} \mathbf{P}=\underset{\emptyset}{\bigcap} = \langle\mathbf{1}\rangle$ and $\mathbf{I}\vee \sqrt{0}= {\underset{\mathbf{I}=\mathbf{I}\cup \sqrt{0}~\subseteq~\mathbf{P}}{\bigcap} \mathbf{P}}=\mathbf{I}$ for any $\mathbf{I}\in 	\bf Zar(\mathbf{K})$. Thus, clearly condition $(1)$ in Definition \ref{supp} is satisfied. Also, conditions $(2)$ and $(5)$ are clearly satisfied using the fact that prime ideals are triangulated subcategories of $\mathbf{K}$. Also, since prime ideals are thick, we have $\sqrt{k}\subseteq \sqrt{k\oplus t}$ and $\sqrt{t}\subseteq \sqrt{k\oplus t}$ for any $ k,t \in \mathbf{K}$ and so $\sqrt{k}\vee \sqrt{t}\subseteq \sqrt{k\oplus t}$. Conversely, we clearly have $k\oplus t\in \sqrt{k}\vee \sqrt{t}$. Hence, condition $(3)$ is also satisfied. Let us now check condition $(4)$.  It is clear that $\sqrt{k\otimes t}\subseteq \sqrt{k}$ and $\sqrt{k\otimes t}\subseteq \sqrt{t}$ and therefore $\sqrt{k\otimes t}\subseteq \sqrt{k}\cap \sqrt{t}$. Finally, we have $\sqrt{k}\cap \sqrt{t}\subseteq \sqrt{k\otimes t}$ since all primes are complete, by \textbf{Assumption}. This shows that $s$ is a support.\\
We will now show that  $s$ is initial among all supports. Let $d:Ob(\mathbf{K})\longrightarrow  F$ be an arbitrary support. Since ${\bf Zar(\mathbf{K})}$ is coherent, every element is a join of finite elements and so any frame map ${\bf Zar(\mathbf{K})}\longrightarrow F$ is completely determined by its value on finite elements. So consider the frame map $u: {\bf Zar(\mathbf{K})}\longrightarrow F$ given by $\sqrt{k}\mapsto d(k)$.  Clearly, we have $u\circ s= d$. In fact, it is also clear that  there cannot be any other choice of map ${\bf Zar(\mathbf{K})}\longrightarrow F$ which is compatible with $s$ and $d$. So there is at most one support map $u$. Let us now check that $u$ is well defined.  Let $k,t \in \mathbf{K}$ be such that  $\sqrt{k}=\sqrt{t}$. We define $I(t)=\{r\in \mathbf{K}~|~d(r)\leq d(t)\}$. It follows from the properties of the support $d$ that $I(t)$ is a thick tensor ideal. Moreover, if $s\in \mathbf{K}$ be such that $s^{\otimes n}\in I(t)$ for some $n\in \mathbb{N}$, then $s\in I(t)$ since $d(s^{\otimes n})= d(s)$. Therefore, it follows from Proposition \ref{radical} that $I(t)$ is a radical thick tensor ideal containing $t$ and hence $\sqrt{t}$. Since $\sqrt{k}\subset \sqrt{t}\subseteq I(t)$ we have $d(k)\leq d(t)$ and by symmetry we obtain our desired result.

\end{proof}

We now proceed to show that if $\mathbf{K}$ is a noncommutative tensor triangulated category satisfying {\bf Assumption}, then the noncommutative Balmer spectrum $Spc(\mathbf{K})$ is the Hochster dual of the Zariski spectrum $Spec_{Zar}(\mathbf{K})$ of $\mathbf{K}$.

\begin{Thm}\label{corres}
Let $\mathbf{K}$ be a noncommutative tensor triangulated category satisfying {\bf Assumption}. Then,
\begin{itemize}
	\item[(1)] the frame-theoretic points of $\bf{Zar}(\mathbf{K})$ correspond bijectively to prime thick tensor ideals in $\mathbf{K}$.
	\item[(2)] Under the above correspondence, a finite element $\sqrt{k}$ of $\bf{Zar}(\mathbf{K})$ corresponds to the set of prime thick tensor ideals $\{\mathbf{P}\in Spc(\mathbf{K})~|~k\notin \mathbf{P}\}$.
\end{itemize}
	\end{Thm}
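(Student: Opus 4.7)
The plan is to exploit the bijections between points, prime ideals and prime elements of any frame (recalled in Section \ref{sec:frames}), so that for part (1) it suffices to identify the prime elements of $\mathbf{Zar}(\mathbf{K})$ with the prime thick tensor ideals of $\mathbf{K}$ in the Nakano--Vashaw--Yakimov sense.

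For one direction, I would start with an NVY-prime thick tensor ideal $\mathbf{P}$. It is automatically proper and radical (the latter being remarked after the definition of radical closure), hence $\mathbf{P} \in \mathbf{Zar}(\mathbf{K})$ with $\mathbf{P} \neq 1$. By the \textbf{Assumption}, $\mathbf{P}$ is completely prime. To see that $\mathbf{P}$ is a prime element, I take radical thick tensor ideals $\mathbf{I}_1, \mathbf{I}_2$ with $\mathbf{I}_1 \cap \mathbf{I}_2 \subseteq \mathbf{P}$; if neither is contained in $\mathbf{P}$, picking $a_j \in \mathbf{I}_j \setminus \mathbf{P}$ and using that each $\mathbf{I}_j$ is a two-sided tensor ideal gives $a_1 \otimes a_2 \in \mathbf{I}_1 \cap \mathbf{I}_2 \subseteq \mathbf{P}$, contradicting complete primality.

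For the converse, let $\mathbf{P}$ be a prime element of $\mathbf{Zar}(\mathbf{K})$. The crucial input is the support identity $\sqrt{a \otimes b} = \sqrt{a} \cap \sqrt{b}$ furnished by Theorem \ref{thm:initsupp}: any $a \otimes b \in \mathbf{P}$ yields $\sqrt{a} \cap \sqrt{b} \subseteq \mathbf{P}$, and the prime-element property forces $\sqrt{a} \subseteq \mathbf{P}$ or $\sqrt{b} \subseteq \mathbf{P}$, i.e., $a \in \mathbf{P}$ or $b \in \mathbf{P}$, so $\mathbf{P}$ is completely prime. A short routine argument (not requiring the \textbf{Assumption}) promotes complete primality to NVY-primality: if $\mathbf{I} \otimes \mathbf{J} \subseteq \mathbf{P}$ and some $a \in \mathbf{I} \setminus \mathbf{P}$ exists, then $a \otimes b \in \mathbf{P}$ for every $b \in \mathbf{J}$ forces $\mathbf{J} \subseteq \mathbf{P}$.

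Part (2) then follows by unravelling the frame-theoretic topology through this identification. A prime ideal $\mathbf{P}$ of $\mathbf{K}$ corresponds to the point $x_{\mathbf{P}} \colon \mathbf{Zar}(\mathbf{K}) \to \{0,1\}$ characterized by $x_{\mathbf{P}}^{-1}(0) = \{\mathbf{I} \in \mathbf{Zar}(\mathbf{K}) \mid \mathbf{I} \subseteq \mathbf{P}\}$, so the basic open $\{x \mid x(\sqrt{k}) = 1\}$ attached to the finite element $\sqrt{k}$ corresponds to $\{\mathbf{P} \mid \sqrt{k} \not\subseteq \mathbf{P}\}$; since every prime $\mathbf{P}$ is radical, $\sqrt{k} \subseteq \mathbf{P}$ is equivalent to $k \in \mathbf{P}$, recovering exactly $\{\mathbf{P} \in Spc(\mathbf{K}) \mid k \notin \mathbf{P}\}$. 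The main obstacle is the converse direction of part (1): the identity $\sqrt{a \otimes b} = \sqrt{a} \cap \sqrt{b}$ (itself relying on the \textbf{Assumption} via Theorem \ref{thm:initsupp}) is what bridges the gap between the frame meet and the monoidal structure on objects — without it, a prime element would only detect ideal-level intersections, not individual tensors $a \otimes b$.
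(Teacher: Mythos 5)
Your proof is correct and follows the same overall strategy as the paper: exploit the standard bijections between frame-theoretic points, prime ideals of the frame, and prime elements, and then identify the prime elements of $\mathbf{Zar}(\mathbf{K})$ with NVY-prime thick tensor ideals. Part (2) is essentially identical to the paper's treatment. The difference lies in how you bridge frame primality and NVY-primality in part (1).

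For the direction ``prime element $\Rightarrow$ NVY-prime,'' the paper argues directly at the level of ideals: from $\mathbf{J}_1 \otimes \mathbf{J}_2 \subseteq \mathbf{P}$ it observes $(\mathbf{J}_1 \cap \mathbf{J}_2)^{\otimes 2} \subseteq \mathbf{J}_1 \otimes \mathbf{J}_2$, deduces $\mathbf{J}_1 \cap \mathbf{J}_2 \subseteq \mathbf{P}$ via radicality and Proposition~\ref{radical}, and then applies the frame primality of $\mathfrak{p}_x$. You instead pass through complete primality at the object level, using the identity $\sqrt{a \otimes b} = \sqrt{a} \cap \sqrt{b}$ (Theorem~\ref{thm:initsupp}, which already depends on the \textbf{Assumption}), and then upgrade complete primality to NVY-primality by a one-line observation. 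Your route makes explicit the role of complete primality as the natural bridge between the meet on $\mathbf{Zar}(\mathbf{K})$ and the monoidal structure on objects, which is conceptually clarifying; the paper's route instead handles ideals wholesale via the radical trick, which is more self-contained in that it leans only on Proposition~\ref{radical} for this particular step. For the other direction, the paper declares ``easily verified'' that $(\mathbf{P})$ is a prime ideal of the frame; you fill in that verification exactly by the same $a_1 \otimes a_2 \in \mathbf{I}_1 \cap \mathbf{I}_2$ argument one would supply. Both proofs are correct and of comparable length; yours is a reasonable, slightly more object-centric rewriting of the same skeleton.
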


\begin{proof}
	\begin{itemize}
		\item[(1)] Recall that for any frame $F$, the frame-theoretic points of $F$ correspond bijectively to the prime ideals of $F$ and the prime ideals in turn are in natural bijection with the prime elements of $F$. Now, we put $F= \bf{Zar}(\mathbf{K})$. For any point $x:{\bf{Zar}(\mathbf{K})}\longrightarrow \{0,1\}$ of $\bf{Zar}(\mathbf{K})$, the corresponding prime ideal of $\bf{Zar}(\mathbf{K})$ is given by $\mathfrak{ p}_x:= x^{-1}(0)$ and the corresponding prime element of $\bf{Zar}(\mathbf{K})$ is given by $\mathbf{I}_x:= \bigvee_{\mathbf{I}\in  \mathfrak{ p}_x}\mathbf{I}$. We also know $\mathfrak{p}_x=(\mathbf{I})_x=\{\mathbf{I}\in \mathbf{Zar}(\mathbf{K})~|~\mathbf{I}\subseteq {\mathbf{I}}_x\}$. We will now show that $\mathbf{I}_x$ is prime. Let $\mathbf{J}_1$ and $\mathbf{J}_2$ be thick tensor ideals such that $\mathbf{J}_1\otimes \mathbf{J}_2\subseteq \mathbf{I}_x$. Clearly, $(\mathbf{J}_1\cap\mathbf{J}_2)^{\otimes 2}\subseteq \mathbf{J}_1\otimes \mathbf{J}_2\subseteq \mathbf{I}_x$. Since $\mathbf{I}_x$ is radical, we have $\mathbf{J}_1\cap\mathbf{J}_2\subseteq \mathbf{I}_x$ by Proposition \ref{radical}. Therefore, we have $ \mathbf{J}_1\wedge\mathbf{J}_2= \mathbf{J}_1\cap\mathbf{J}_2\in (\mathbf{I}_x)=\mathfrak{ p}_x$ in the frame $\bf{Zar}(\mathbf{K})$. Since $\mathfrak{p}_x$ is a prime ideal of $\bf{Zar}(\mathbf{K})$ we must have $\mathbf{J}_1\in \mathfrak{p}_x$ or $\mathbf{J}_2\in \mathfrak{p}_x$. In other words, we have $\mathbf{J}_1\subseteq \mathbf{I}_x$ or $\mathbf{J}_2\subseteq \mathbf{I}_x$ which proves that $\mathbf{I}_x$ is prime.
		Thus, we obtain the following well defined map of sets
		\begin{equation}\label{bijec}
			\{\text{frame-theoretic points of $\bf{Zar}(\mathbf{K})$}\}\longrightarrow \{\text{prime thick tensor ideals in $\mathbf{K}$}\}\qquad\qquad x\mapsto \mathbf{I}_x
		\end{equation}
		If $\mathbf{I}_x= \mathbf{I}_y$, then we have $x^{-1}(0)=\mathfrak{p}_x=(\mathbf{I}_x)=(\mathbf{I}_y)=\mathfrak{p}_y=y^{-1}(0)$. Clearly, this implies $x=y$ showing that \eqref{bijec} is an injection. To show surjection, let $\mathbf{P}$ be any prime thick tensor ideal of $\mathbf{K}$. It may be easily verified that $(\mathbf{P}):= \{\mathbf{I}\in \bf{Zar}(\mathbf{K})~|~\mathbf{I}\subseteq \mathbf{P}\}$ defines a prime ideal of the frame $\bf{Zar}(\mathbf{K})$. Now, we define $y: \mathbf{Zar({K})}\longrightarrow \{0,1\}$ by
		\[
		y(\mathbf{I}):=
		\begin{cases}
			0  & \text{ if $\mathbf{I}\in (\mathbf{P})$} \\
			1 & \text{otherwise}
		\end{cases}
		\]
		It may be easily verified that $y: \mathbf{Zar}(\mathbf{K})\longrightarrow \{0,1\}$ is a morphism of frames which shows that $y$ is a frame-theoretic point of $\bf{Zar}(\mathbf{K})$. Since $\mathfrak{p}_y:=y^{-1}(0) =(\mathbf{P})$, it follows that $y$ maps to $\mathbf{P}$ under \eqref{bijec}.
		
		\item[(2)] The open set corresponding to the finite element $\sqrt{k}$ of the coherent frame $\mathbf{Zar({K})}$ is $\{x:\mathbf{Zar}(\mathbf{K})\longrightarrow \{0,1\}~|~x(\sqrt{k})=1\}$. Clearly, we have 
		$$x(\sqrt{k})=1 \iff \sqrt{k}\notin \mathfrak{p}_x= x^{-1}(0)= (\mathbf{I}_x)\iff k\notin \mathbf{I}_x$$
		Since \eqref{bijec} is a bijection, it follows that the set $\{x:\mathbf{Zar}(\mathbf{K})\longrightarrow \{0,1\}~|~x(\sqrt{k})=1\}$ corresponds bijectively to the set of prime thick tensor ideals $\{\mathbf{P}\in Spc(\mathbf{K})~|~k\notin \mathbf{P}\}$.
	\end{itemize}
	
\end{proof}

\begin{cor}\label{Hdual}
	Let $\mathbf{K}$ be a noncommutative tensor triangulated category satisfying {\bf Assumption}. The noncommutative Balmer's spectrum $Spc(\mathbf{K})$ of $\mathbf{K}$ is the Hochster dual of the Zariski
	spectrum $Spec_{Zar}(\mathbf{K})$. 
\end{cor}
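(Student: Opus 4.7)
The strategy is to use the point-set bijection of Theorem \ref{corres}(1) and then check that it intertwines the Zariski topology on $Spc(\mathbf{K})$ with the Hochster dual topology on $Spec_{Zar}(\mathbf{K})$ by comparing a convenient basis of opens on each side. First, Theorem \ref{corres}(1) identifies the underlying set of $Spec_{Zar}(\mathbf{K})$, namely the frame-theoretic points of $\mathbf{Zar}(\mathbf{K})$, with $Spc(\mathbf{K})$; all the remaining work is topological.

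Next, I would describe the basic opens on the Hochster dual side. By the spectral--coherent correspondence (Theorem \ref{spec-cohfr}), the quasi-compact opens of $Spec_{Zar}(\mathbf{K})$ correspond precisely to the finite elements of the coherent frame $\mathbf{Zar}(\mathbf{K})$, which by Theorem \ref{cohfr} are exactly the principal radicals $\sqrt{k}$ for $k \in Ob(\mathbf{K})$; Theorem \ref{corres}(2) then identifies the corresponding open as $\{\mathbf{P} \in Spc(\mathbf{K}) : k \notin \mathbf{P}\}$. By the definition of the Hochster dual, the basic opens of $Spec_{Zar}(\mathbf{K})^{\vee}$ are the set-theoretic complements of these basic quasi-compact opens, i.e.\ the sets $\{\mathbf{P} : k \in \mathbf{P}\}$ for $k \in Ob(\mathbf{K})$.

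On the $Spc(\mathbf{K})$ side, Definition \ref{SpcK} gives the basic closed sets of the Zariski topology as $V(\{k\}) = \{\mathbf{P} : k \notin \mathbf{P}\}$, so their complements $\{\mathbf{P} : k \in \mathbf{P}\}$ form a sub-basis of opens; since $\{\mathbf{P} : k_1 \in \mathbf{P}\} \cap \{\mathbf{P} : k_2 \in \mathbf{P}\} = \{\mathbf{P} : k_1 \oplus k_2 \in \mathbf{P}\}$ (using thickness of $\mathbf{P}$ to recover each summand from their direct sum), this sub-basis is in fact a basis. The two families of basic opens thus agree under the bijection of Theorem \ref{corres}, and the identification is therefore a homeomorphism.

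The only technical subtlety I expect to spend time on is bridging the two formulations of Hochster duality in play: the definition given in the paper for a coherent frame (via its join completion, following Kock--Pitsch) and Hochster's original topological definition in terms of closed sets with quasi-compact complements. Once one unwinds that the points of the join completion are the prime elements of the frame and that the opens corresponding to finite elements of the original frame become basic closed sets of the dual topological space, the topological comparison above goes through directly from Theorem \ref{corres}(2).
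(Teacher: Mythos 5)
Your argument is correct and takes essentially the same route as the paper's (much terser) proof: both use Theorem \ref{corres}(2) to identify the quasi-compact opens of $Spec_{Zar}(\mathbf{K})$ with the sets $\{\mathbf{P} : k \notin \mathbf{P}\}$ and then observe that their complements $\{\mathbf{P} : k \in \mathbf{P}\}$ are exactly the basic opens of the Zariski topology on $Spc(\mathbf{K})$. You also make explicit two points the paper leaves implicit, namely that $\{\mathbf{P} : k_1 \in \mathbf{P}\} \cap \{\mathbf{P} : k_2 \in \mathbf{P}\} = \{\mathbf{P} : k_1 \oplus k_2 \in \mathbf{P}\}$ so these sets form an honest basis, and that the relevant notion of Hochster dual here is the topological one recalled just before Definition~\ref{kopi:hochdual} is paraphrased, so the join-completion formulation never actually needs to be unwound.
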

\begin{proof}
	The topology of $Spc(\mathbf{K})$ is given by open sets which are complements of the sets of the form $\{\mathbf{P}\in Spc(\mathbf{K})~|~k\notin \mathbf{P}\}$. The result therefore  follows from Theorem  \ref{corres}.
\end{proof}

From the frame theoretic support data, one can reconstruct the support data $V
\colon \mathbf{K} \longrightarrow \mathcal{X}_{cl}(Spc(\mathbf{K}))$
described by Nakano, Vashaw and Yakimov \cite{nvy:supptpp}*{Definition 2.3.1}. This is described below in terms of a functorial equivalence between the frame theoretic support data and the support data taking values in closed subsets of $Spc(\mathbf{K})$.

\begin{cnst} \label{cns:suppdatalinks}
We briefly recall the construction of a topological support data
corresponding to a frame theoretic support data. Suppose that $d \colon
\mathbf{K} \to F$ is a frame-theoretic support data and that $F$ is
coherent. Let $X_F$ be the spectral space corresponding to $F$ (see Theorem
\ref{spec-cohfr}). We know that the points in $X_F$
correspond to frame maps $p \colon F \to \{0, 1\}$ and the topology consists of
open sets $U_f = \{p \in X_F\,|\, p(f) = 1\}$. Let $Y_F$ be the Hochster
dual of $X_F$, where the open sets are closed subsets of $X_F$ with
quasi-compact complement.  Consider the assignment $\sigma \colon \mathbf{K}
\longrightarrow \mathcal{X}_{cl}(Y_F)$ given by
\begin{equation*}
  \sigma(a) = \{p \in X_F \,|\, p(d(a)) = 1 \}.
\end{equation*}
This is well defined (see remark \ref{rem:suppclosed}).

One also has a reverse construction.  By the Lemma in \cite{johnstone:stonespaces}*{page 41},
the closed subsets of $Y_F$ are in one-to-one correspondence with elements
of $F$. Thus, given a support $\sigma \colon \mathbf{K} \longrightarrow
\mathcal{X}_{cl}(Y_F)$, one can define for $a \in \mathbf{K}$, $d(a)$ to be
the element of $F$ corresponding to the closed subset $\sigma(a)$. When
$\sigma$ satisfies the tensor product property, $d$ turns out to be a frame
theoretic support in the sense of definition \ref{def:noncommfts}.
\end{cnst}

\begin{rem} \label{rem:suppclosed}
In the argument below we shall need the fact that the support $\sigma(a)$ constructed above are closed subsets of $Y_F$. This is standard and can be seen as follows. The space $Y_F$ is denoted by $(Spec\,F)_{\text{inv}}$ in \cite{MR3929704}. The subsets of the form $\set{p \colon F \longrightarrow \{0,1\}}{p(a) = 1}$ are quasi-compact by \cite{MR3929704}*{2.2.3(c)} as $Spec\,F$ inherits the subspace topology from $2^F$.
\end{rem}

\begin{notn} \label{not:suppdata}
  Let $\mathcal{F}$ be the category of support data for $\mathbf{K}$ taking
  values in coherent frames as in definition \ref{def:noncommfts}. 

  Let $\mathcal{S}$ be the category of support data for $\mathbf{K}$ taking
  values in spectral topological spaces, along with the restriction that the
  support data should have the tensor product property. In other words, an
  object $(X, \sigma)$ in $\mathcal{S}$ is a support data $\sigma \colon
  \mathbf{K} \longrightarrow \mathcal{X}_{cl}(X)$, where $X$ is a spectral
  topological space and $\sigma$ in additon to being a support data as
  defined in Definition \ref{def:nvysuppdata}, satisfies $\sigma(a \otimes b) =
  \sigma(a) \cap \sigma(b)$. A morphism $\psi \colon (X, \sigma)
  \longrightarrow (X', \sigma')$ in $\mathcal{S}$ is a continuous map $f \colon X
  \longrightarrow X'$ such that for all $a \in Ob(\mathbf{K})$,
  $\sigma(a) = f^{-1}(\sigma'(a))$.
\end{notn}

\begin{lem} \label{lem:ftsptpsp}
  Suppose $(F, d)$ belongs to $\mathcal{F}$. Then the support data $(Y_F,
  \sigma)$ constructed in \ref{cns:suppdatalinks} belongs to $\mathcal{S}$.
  Conversely, if $(X, \tau)$ belongs to $\mathcal{S}$, then the reverse
  construction gives an element $(F_X, d)$ of $\mathcal{F}$.
\end{lem}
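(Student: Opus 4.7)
The plan is to verify the axioms on each side by a direct translation, once the right dictionaries are in place.

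For the forward direction, assume $(F, d) \in \mathcal{F}$ and set $\sigma(a) = \{p \in X_F \mid p(d(a)) = 1\}$. I would first note that $\sigma(a)$ is closed in $Y_F$ by Remark \ref{rem:suppclosed}. Then I would check Definition \ref{def:nvysuppdata}(1)--(5) together with the tensor product property by exploiting the fact that every point $p \colon F \longrightarrow \{0,1\}$ is a frame map and so preserves $0$, $1$, finite joins and finite meets. Axiom (1) follows from $p(0) = 0$ and $p(1) = 1$; axiom (2) from $p$ preserving $\vee$ combined with $d(a \oplus b) = d(a) \vee d(b)$; axiom (3) from $d(\Sigma a) = d(a)$; the distinguished triangle axiom (4) from $d(b) \leq d(a) \vee d(c)$ combined with monotonicity of $p$; and the tensor product property $\sigma(a \otimes b) = \sigma(a) \cap \sigma(b)$ from $p$ preserving binary meets, applied to $d(a \otimes b) = d(a) \wedge d(b)$. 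Axiom (5) of Definition \ref{def:nvysuppdata} then follows by specializing the tensor product property at $C = \mathbf{1}$ and using $\sigma(\mathbf{1}) = Y_F$.

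For the reverse direction, given $(X, \tau) \in \mathcal{S}$, I would define $d(a) \in F_X$ as the element corresponding to the closed subset $\tau(a) \subseteq X$ under the bijection supplied by the Lemma in \cite{johnstone:stonespaces} cited in Construction \ref{cns:suppdatalinks}. The crucial ingredient is that this bijection is a lattice isomorphism: it sends $\emptyset \mapsto 0$, $X \mapsto 1$, $\cup \mapsto \vee$, $\cap \mapsto \wedge$ and inclusions to $\leq$. Granted this, every axiom in Definition \ref{def:noncommfts} transcribes immediately from the corresponding axiom for $\tau$: axiom (3) from $\sigma(A \oplus B) = \sigma(A) \cup \sigma(B)$, axiom (4) from the tensor product property built into objects of $\mathcal{S}$, and axiom (5) from the distinguished triangle inclusion $\sigma(B) \subseteq \sigma(A) \cup \sigma(C)$.

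The main technical point, and the step that requires care, is verifying that the bijection between closed subsets of the spectral space $X$ and elements of the associated coherent frame $F_X$ is indeed a lattice isomorphism intertwining unions and intersections with joins and meets. This is a standard consequence of Hochster duality and the spatiality of coherent frames (via Theorem \ref{spec-cohfr}), but one must track the cited Lemma carefully to be sure the identifications line up. Once this dictionary is pinned down, both halves of the lemma reduce to routine bookkeeping.
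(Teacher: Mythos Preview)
Your proposal is correct and follows essentially the same approach as the paper: both directions are handled by directly translating the axioms through the dictionary that sends a frame element $f$ to the set $\{p \mid p(f) = 1\}$, using that points are frame maps (so preserve $0$, $1$, finite meets and joins). The paper makes the reverse direction slightly more explicit by writing out the identities $\sigma(a)\cup\sigma(b)=\{p\mid p(d(a)\vee d(b))=1\}$ and $\sigma(a)\cap\sigma(b)=\{p\mid p(d(a)\wedge d(b))=1\}$ and then invoking spatiality of coherent frames (the assignment $f \mapsto \{p\mid p(f)=1\}$ is injective) rather than naming the lattice isomorphism abstractly as you do, but this is the same argument; your identification of the triangle axioms also matches the paper's up to the standard rotation of distinguished triangles.
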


\begin{proof}
  Suppose $(F, d) \in \mathcal{F}$.
  That $d(0) = 0$ and $d(1) = 1$ (property (1) in definition
  \ref{def:noncommfts}) implies that $\sigma(0) = \emptyset$ and
  $\sigma(\mathbf{1}) = X$ and that $d(\Sigma k) = d(k)$ for all $k \in
  Ob(\mathbf{K})$ implies that $\sigma(\Sigma(k)) = \sigma(k)$ for all
  $k$ is straightforward.

  We know that $d(k \oplus t) = d(k) \vee d(t)$ for all $k, t \in
  Ob(\mathbf{K})$. Thus,
  \begin{align*}
    \sigma(k \oplus t)
    &= \set{p}{p(d(k \oplus t)) = 1}
    = \set{p}{p(d(k) \vee d(t)) = 1}
    = \set{p}{p(d(k)) \vee p(d(t)) = 1} \\
    &= \set{p}{p(d(k)) = 1 \text{ or } p(d(t)) = 1}
    = \set{p}{p(d(k)) = 1} \cup \set{p}{p(d(t)) = 1} \\
    &= \sigma(k) \cup \sigma(t).
  \end{align*}
  proving (2) of definition \ref{def:nvysuppdata}.

  Property (5) in definition \ref{def:noncommfts} implies that if $a \to b
  \to c \to \Sigma a$ is a distinguished triangle then $d(a) \leq
  d(b) \vee d(c)$. Thus, for any point $p \colon F \to \{0, 1\}$,
  $p(d(a)) \leq p(d(b)) \vee p(d(c))$. We have
  \begin{align*}
    \sigma(a)
    &= \set{p}{p(d(a)) = 1} \\
    &\subseteq \set{p}{p(d(b)) \vee p(d(c)) = 1} 
    = \set{p}{p(d(b)) = 1 \text{ or } p(d(c)) = 1} \\
    &= \sigma(b) \cup \sigma(c).
  \end{align*}
  This establishes \ref{def:nvysuppdata}(4). It remains to check (5). Note
  that for any $a, b \in Ob(\mathbf{K})$,
  \begin{align*}
    \sigma(a \otimes b)
    &= \set{p}{p(d(a \otimes b)) = 1}
    = \set{p}{p(d(a)) \wedge p(d(b)) = 1} \quad \text{(by definition
    \ref{def:noncommfts}(4))} \\
    &= \set{p}{p(d(a)) = 1} \cap \set{p}{p(d(b)) = 1}
    = \sigma(a) \cap \sigma(b)
  \end{align*}
  Thus such a support satisfies the tensor product property and hence also
  satisfies the property (5). This completes the proof that $(Y, \sigma)$
  constructed in \ref{cns:suppdatalinks} belongs to $\mathcal{S}$.

  Conversely suppose $(X, \sigma) \in \mathcal{S}$. Let us denote the
  coherent frame corresponding to $X$ by $F_X$, and $d(a)$ to be the element of
  $F_X$ corresponding to the closed subset $\sigma(a) = \set{p \in Y_{X_F}}{p(d(a))
  = 1}$. Now observe the equalities:
  \begin{align*}
    \sigma(a) \cup \sigma(b)
    &= \set{p}{p(d(a)) = 1} \cup \set{p}{p(d(b)) = 1}
    = \set{p}{p(d(a)) = 1 \text{ or } p(d(b)) = 1}
    = \set{p}{p(d(a) \vee d(b)) = 1}. \\
    \sigma(a) \cap \sigma(b)
    &= \set{p}{p(d(a)) = 1} \cap \set{p}{p(d(b)) = 1}
    = \set{p}{p(d(a)) = 1 \text{ and } p(d(b)) = 1}
    = \set{p}{p(d(a) \wedge d(b)) = 1}. 
  \end{align*}
  These equalities, along with the fact that the subset $\set{p}{p(f) = 1}$
  uniquely determines $f$, and the computations above give us the fact that
  $d$ satisfies all the properties listed in definition
  \ref{def:noncommfts}. Thus $(F_X, d) \in \mathcal{F}$.
\end{proof}

\begin{defn}
  \label{def:eqsigmaF}
  Let $\Xi \colon \mathcal{F} \longrightarrow \mathcal{S}$ and $\Gamma \colon
  \mathcal{S} \longrightarrow \mathcal{F}$ be the two maps constructed in
  \ref{cns:suppdatalinks}. These are well defined by lemma
  \ref{lem:ftsptpsp}.
\end{defn}

\begin{lem}
  $\Xi$ and $\Gamma$ are contravariant functors inducing equivalences between
  $\mathcal{F}$ and $\mathcal{S}$.
\end{lem}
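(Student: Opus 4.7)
The plan is to combine Joyal's contravariant equivalence (Theorem \ref{spec-cohfr}) with the involutive Hochster duality on spectral spaces to lift the object-level assignments from Lemma \ref{lem:ftsptpsp} to functors, and then to check that they are mutually quasi-inverse. The core observation is that the support-compatibility conditions on the two sides translate into each other under the pointwise description built into Construction \ref{cns:suppdatalinks}.

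First I would define $\Xi$ on morphisms. Given $\varphi \colon (F, d) \longrightarrow (F', d')$ in $\mathcal{F}$, Joyal produces a spectral map $X_{F'} \longrightarrow X_F$, $p \mapsto p \circ \varphi$, whose Hochster dual is a continuous map $\Xi(\varphi) \colon Y_{F'} \longrightarrow Y_F$. The support compatibility is then the direct check
\begin{equation*}
\Xi(\varphi)^{-1}(\sigma(a)) = \{p \in X_{F'} \mid p(\varphi(d(a))) = 1\} = \{p \mid p(d'(a)) = 1\} = \sigma'(a),
\end{equation*}
making $\Xi(\varphi)$ a morphism $(Y_{F'}, \sigma') \longrightarrow (Y_F, \sigma)$ in $\mathcal{S}$. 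Contravariance and the identity $\Xi(\psi \circ \varphi) = \Xi(\varphi) \circ \Xi(\psi)$ are then inherited from Joyal and Hochster. Dually, a continuous $f \colon (X, \tau) \longrightarrow (X', \tau')$ in $\mathcal{S}$ gives a frame map $\Gamma(f) \colon F_{X'} \longrightarrow F_X$ which sends the element of $F_{X'}$ represented by $\tau'(a)$ to the element represented by $f^{-1}(\tau'(a)) = \tau(a)$, i.e.\ the compatibility $d(a) = \Gamma(f)(d'(a))$ needed for a morphism in $\mathcal{F}$.

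To establish the equivalence, I would unwind $\Gamma \circ \Xi$ on $(F, d)$: it returns $(F_{Y_F}, \tilde{d})$ where $\tilde{d}(a)$ is the element of $F_{Y_F}$ represented by $\sigma(a) = \{p \mid p(d(a)) = 1\}$. Under the canonical isomorphism $F \xrightarrow{\sim} F_{Y_F}$ furnished by Johnstone's lemma (page 41) together with Joyal, the element $d(a)$ maps to exactly this closed subset, producing a natural isomorphism $\Gamma \circ \Xi \cong \mathrm{id}_{\mathcal{F}}$. The reverse composite $\Xi \circ \Gamma \cong \mathrm{id}_{\mathcal{S}}$ is symmetric and uses that Hochster duality is involutive on spectral spaces; naturality is automatic from the pointwise descriptions.

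The bookkeeping point I expect to be the main obstacle is ensuring that the frame map $\Gamma(f)$ is actually a \emph{coherent} frame map (so that Joyal applies), and symmetrically that the map underlying $\Xi(\varphi)$ is spectral (so that its Hochster dual is continuous in the required sense). Both follow from the pointwise description above together with the fact from Theorem \ref{cohfr} that the finite elements of $\mathbf{Zar}(\mathbf{K})$ are precisely the principal radicals $\sqrt{k}$: support-compatibility pins down these maps on a generating set of finite elements, which is enough to force preservation of finiteness (resp.\ quasi-compactness) throughout.
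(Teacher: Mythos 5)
Your overall route matches the paper's, which is essentially a one-line appeal to Joyal's equivalence (Theorem \ref{spec-cohfr}) together with the computations in Lemma \ref{lem:ftsptpsp}; you have simply filled in the morphism-level constructions that the paper leaves implicit: $\Xi(\varphi)$ is the Hochster dual of the spectral map $p \mapsto p \circ \varphi$, $\Gamma(f)$ is the frame map induced by $U \mapsto f^{-1}(U)$, and the natural isomorphisms $\Gamma \circ \Xi \cong \mathrm{id}$, $\Xi \circ \Gamma \cong \mathrm{id}$ come from the unit and counit of Joyal's duality together with the involutivity of Hochster duality and Johnstone's lemma on page 41. All of that is fine and is the content of what the paper's terse proof is gesturing at.

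The ``bookkeeping point'' you flag at the end is genuinely the delicate spot, but the argument you give for it does not work. You invoke Theorem \ref{cohfr}, which identifies the finite elements of $\mathbf{Zar}(\mathbf{K})$ with the principal radicals $\sqrt{k}$, to conclude that support-compatibility pins down $\varphi$ (resp.\ $f$) on a generating set of finite elements and hence forces coherence (resp.\ spectrality). But a general object of $\mathcal{F}$ is a pair $(F, d)$ with $F$ an \emph{arbitrary} coherent frame; its finite elements need not be of the form $\sqrt{k}$, the values $d(a)$ need not themselves be finite, and even when they are there is no reason for them to generate the sublattice of finite elements. So the relation $\varphi(d(a)) = d'(a)$ says nothing about how $\varphi$ behaves on an arbitrary finite element of $F$, and does not by itself imply that $\varphi$ preserves finiteness. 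The fact you cite is special to $\mathbf{Zar}(\mathbf{K})$ and cannot be used for the category-wide claim. If one wants $\Xi$ to be a functor into the category of Hochster-dualised spectral spaces, one should either restrict the morphisms in $\mathcal{F}$ to coherent frame maps (and dually in $\mathcal{S}$ to spectral maps), or give a different argument; the paper's own proof does not address this either, but your proposed fix is not a correct one.
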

\begin{proof}
  That $\Sigma \circ \Xi$ and $\Xi \circ \Sigma$ are naturally isomorphic to
  the identity functors follow from the correspondence between spectral
  spaces and coherent frames and the computations done in the proof of
 Lemma \ref{lem:ftsptpsp}.
\end{proof}

\begin{lem}
  $(F_0, d_0)$ is an initial support datum in $\mathcal{F}$ if and only if the
  corresponding support datum $(Y_{F_0}, \sigma_0) = \Xi( (F_0, d_0) )$ is a
  final support datum.
\end{lem}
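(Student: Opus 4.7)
The plan is to deduce this statement as a purely categorical consequence of the previous lemma, which establishes that $\Xi$ and $\Gamma$ are mutually inverse contravariant equivalences between $\mathcal{F}$ and $\mathcal{S}$. A general principle is that any contravariant equivalence swaps initial and final objects: if $\Phi \colon \mathcal{A} \to \mathcal{B}$ is a contravariant equivalence with quasi-inverse $\Psi$, and $A_0 \in \mathcal{A}$ is initial, then for any $B \in \mathcal{B}$ we have a natural bijection $\mathrm{Hom}_{\mathcal{B}}(B, \Phi(A_0)) \cong \mathrm{Hom}_{\mathcal{A}}(A_0, \Psi(B))$, and the latter is a singleton by initiality of $A_0$. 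Hence $\Phi(A_0)$ is final. The converse follows by symmetry.

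Concretely, suppose $(F_0, d_0)$ is initial in $\mathcal{F}$. To show $\Xi(F_0, d_0) = (Y_{F_0}, \sigma_0)$ is final in $\mathcal{S}$, I take an arbitrary $(X, \tau) \in \mathcal{S}$ and must produce a unique morphism $(X, \tau) \to (Y_{F_0}, \sigma_0)$. Applying $\Gamma$ gives $(F_X, d) = \Gamma(X, \tau) \in \mathcal{F}$, and by initiality of $(F_0, d_0)$ there is a unique morphism $\varphi \colon (F_0, d_0) \to (F_X, d)$. Applying $\Xi$ to $\varphi$ and precomposing with the unit isomorphism $(X, \tau) \xrightarrow{\sim} \Xi\Gamma(X, \tau) = \Xi(F_X, d)$ produces a morphism $(X, \tau) \to \Xi(F_0, d_0)$. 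Uniqueness follows because $\Xi$ is fully faithful (as part of an equivalence) and any other such morphism, when transported through $\Gamma$, would give a second morphism $(F_0, d_0) \to (F_X, d)$, contradicting initiality.

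The converse direction is completely symmetric: assuming $(Y_{F_0}, \sigma_0)$ is final in $\mathcal{S}$, for any $(F, d) \in \mathcal{F}$ we apply $\Xi$ to get $\Xi(F, d) \in \mathcal{S}$, use finality to obtain a unique morphism $\Xi(F, d) \to \Xi(F_0, d_0)$, and transport it back via $\Gamma$ to a unique morphism $(F_0, d_0) \to (F, d)$ (using the counit isomorphism $\Gamma\Xi(F, d) \cong (F, d)$).

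There is no substantive obstacle; the only thing to check is that the equivalence in the previous lemma is indeed contravariant in the appropriate sense so that the Hom-set bijection interchanges source and target, which is precisely what was proved there. The proof can be compressed to a couple of lines simply invoking the equivalence of categories and the fact that contravariant equivalences interchange initial and final objects.
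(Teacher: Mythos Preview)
Your proposal is correct and takes essentially the same approach as the paper: the paper's proof is the single sentence ``This follows from the contravariance of the functors involved,'' and your argument is precisely the unpacking of that line, spelling out why a contravariant equivalence interchanges initial and final objects.
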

\begin{proof}
  This follows from the contravariance of the functors involved.
\end{proof}

\begin{thm} \label{pro:cmpkpnvy}
  Let $\mathbf{K}$ be a noncommutative tensor triangulated category
  satisfying \textbf{Assumption}, and $Spc(\mathbf{K})$ be the corresponding noncommutative
  Balmer spectrum. The support data given by $k \mapsto \sqrt{k}$ taking
  values in $\bf{Zar}(\mathbf{K})$ induces a support (in the sense of
  definition \ref{def:nvysuppdata}) on $Spc(\mathbf{K})$. Moreover, this
  support data matches with the one given by $V \colon \mathbf{K}
  \longrightarrow \mathcal{X}_{cl}(Spc (\mathbf{K}))$:
  \begin{equation*}
    V(A) = \{ \mathbf{P} \in Spc(\mathbf{K}) \,|\, A \notin \mathbf{P} \}.
  \end{equation*}
\end{thm}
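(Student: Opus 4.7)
The plan is to reduce the statement to the machinery already developed in the preceding lemmas, rather than re-verifying each axiom by hand. The key observation is that the frame-theoretic support datum $s \colon Ob(\mathbf{K}) \longrightarrow \mathbf{Zar}(\mathbf{K})$, $k \mapsto \sqrt{k}$ from Theorem \ref{thm:initsupp} lies in the category $\mathcal{F}$, and its image $\Xi((\mathbf{Zar}(\mathbf{K}), s))$ under the equivalence of Definition \ref{def:eqsigmaF} automatically lands in $\mathcal{S}$. Thus the axioms of Definition \ref{def:nvysuppdata} (along with the tensor product property) come for free.

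First, I would invoke Theorem \ref{cohfr} to note that $\mathbf{Zar}(\mathbf{K})$ is coherent, so $\Xi$ is applicable to $(\mathbf{Zar}(\mathbf{K}), s)$. Unwinding Construction \ref{cns:suppdatalinks}, the image is the pair $(Y_{\mathbf{Zar}(\mathbf{K})}, \sigma)$ where
\begin{equation*}
    \sigma(a) = \bigl\{p \colon \mathbf{Zar}(\mathbf{K}) \to \{0,1\} \;\big|\; p(\sqrt{a}) = 1\bigr\},
\end{equation*}
and $Y_{\mathbf{Zar}(\mathbf{K})}$ is the Hochster dual of $Spec_{Zar}(\mathbf{K})$. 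By Lemma \ref{lem:ftsptpsp} this pair belongs to $\mathcal{S}$, so $\sigma$ automatically satisfies all five axioms of Definition \ref{def:nvysuppdata}. This verifies the first assertion of the theorem, once we identify $Y_{\mathbf{Zar}(\mathbf{K})}$ with $Spc(\mathbf{K})$.

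Next, I would invoke Corollary \ref{Hdual} to identify $Y_{\mathbf{Zar}(\mathbf{K})}$ with $Spc(\mathbf{K})$ as topological spaces. To match the support data with $V$, I would use Theorem \ref{corres}: part (1) provides the bijection $x \leftrightarrow \mathbf{I}_x$ between frame-theoretic points of $\mathbf{Zar}(\mathbf{K})$ and prime thick tensor ideals of $\mathbf{K}$, and part (2) states precisely that the finite-element open set $\{x \mid x(\sqrt{a}) = 1\}$ corresponds to $\{\mathbf{P} \in Spc(\mathbf{K}) \mid a \notin \mathbf{P}\}$. Combining these, $\sigma(a) = V(a)$ as subsets of $Spc(\mathbf{K})$, which is the desired matching.

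The only subtle point, which I would flag but not belabor, is ensuring that the identification of topologies is consistent: in $Y_{\mathbf{Zar}(\mathbf{K})}$ the sets $\sigma(a)$ are open (by Remark \ref{rem:suppclosed}, they are closed with quasi-compact complement in $X_{\mathbf{Zar}(\mathbf{K})}$), and after passing to the Hochster dual they become closed, matching the closed sets $V(a)$ in $Spc(\mathbf{K})$. Apart from this bookkeeping, the result is essentially a formal consequence of Theorems \ref{cohfr}, \ref{thm:initsupp}, \ref{corres}, Corollary \ref{Hdual}, and the equivalence $\Xi \colon \mathcal{F} \to \mathcal{S}$; I do not expect any genuine obstacle.
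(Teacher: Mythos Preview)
Your argument is correct and reaches the same conclusion, but the route differs from the paper's. The paper argues abstractly via universal properties: since $(\mathbf{Zar}(\mathbf{K}),\sqrt{\ })$ is initial in $\mathcal{F}$ (Theorem~\ref{thm:initsupp}) and $\Xi$ is a contravariant equivalence, $\Xi\bigl((\mathbf{Zar}(\mathbf{K}),\sqrt{\ })\bigr)$ is final in $\mathcal{S}$; then it invokes \cite{nvy:supptpp}*{Theorem~2.3.2(a)} to say that $(Spc(\mathbf{K}),V)$ is also final in $\mathcal{S}$, and concludes by uniqueness of final objects. You instead compute the identification explicitly, using Corollary~\ref{Hdual} to match the underlying spaces and Theorem~\ref{corres}(2) to match the supports $\sigma(a)$ with $V(a)$ pointwise. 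Your approach is more hands-on and entirely self-contained within the paper (no appeal to the external finality result from \cite{nvy:supptpp}); the paper's approach is shorter and more conceptual but leans on that citation.

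One small wording slip to clean up: in your final paragraph you say that in $Y_{\mathbf{Zar}(\mathbf{K})}$ the sets $\sigma(a)$ are open and then ``after passing to the Hochster dual they become closed,'' but $Y_{\mathbf{Zar}(\mathbf{K})}$ \emph{is} already the Hochster dual. The correct bookkeeping is: $\sigma(a)=\{p\mid p(\sqrt{a})=1\}$ is a quasi-compact open of $X_{\mathbf{Zar}(\mathbf{K})}=Spec_{Zar}(\mathbf{K})$, hence closed in $Y_{\mathbf{Zar}(\mathbf{K})}$ (this is exactly what Remark~\ref{rem:suppclosed} records), and under the homeomorphism of Corollary~\ref{Hdual} this closed set is carried to the closed set $V(a)\subseteq Spc(\mathbf{K})$. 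This is purely cosmetic and does not affect the substance of your argument.
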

\begin{proof}
  We know that the support datum $({\bf Zar}(\mathbf{K}), \sqrt{\ })$ is
  initial in $\mathcal{F}$. Thus $\Xi\bigl(({\bf Zar}(\mathbf{K}), \sqrt{\
  })\bigr)$ is a final support datum in $\mathcal{S}$, i.e.\ it is final
  among all support data of the form $(X, \sigma \colon \mathbf{K}
  \longrightarrow Spc(\mathbf{K}))$. But by Theorem 2.3.2(a) in
  \cite{nvy:supptpp}, $V$ described above is also the final support datum in
  $\mathcal{S}$. Thus by universality of final objects, there is a natural
  isomorphism between $\Xi\bigl(({\bf Zar}(\mathbf{K}), \sqrt{\ })\bigr)$
  and $(Spc(\mathbf{K}, V)$.
\end{proof}

\begin{rem}
  Proposition \ref{pro:cmpkpnvy} shows that the frame-theoretic methods
  reconstructs the support function $V$ from a categorical viewpoint.
\end{rem}

Next, we show that the bijective correspondence between the radical thick tensor ideals and the open subsets of $Spc(\mathbf{K})^\vee$ can be promoted to a homeomorphism of spectral spaces.
\begin{Thm}\label{noncomTN}
Let $\mathbf{K}$ be a noncommutative tensor triangulated category satisfying {\bf Assumption} and let $Spc(\mathbf{K})^\vee$ be the Hochster dual of the noncommutative Balmer's spectrum $Spc(\mathbf{K})$. Then, the following spaces are spectral and there is a homeomorphism between them:
\begin{itemize}
\item[(1)] The frame $\bf{Zar}(\mathbf{K})$ of radical thick tensor ideals of $\mathbf{K}$ endowed with the topology generated by the open sets
\begin{equation}\label{lit1}
\{\mathbf{I}\in \bf{Zar}(\mathbf{K})~|~k\notin \mathbf{I}\}\quad \quad \forall~k\in \mathbf{K}
\end{equation}
\item[(2)] The poset $\Omega(Spc(\mathbf{K})^\vee)$ of open subsets of $Spc(\mathbf{K})^\vee$ (or equivalently, open subsets of $Spec_{Zar}(\mathbf{K})$) endowed with the topology generated by the open sets
\begin{equation}\label{lit2}
\{V\in \Omega(Spc(\mathbf{K})^\vee)~|~ V\nsupseteq U\}\quad \quad \forall~U\in \Omega(Spc(\mathbf{K})^\vee)
\end{equation}
\end{itemize}
\end{Thm}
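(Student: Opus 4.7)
The plan is to identify the two underlying sets via canonical bijections inherited from earlier results, verify that the prescribed topologies coincide under this identification, and then establish spectrality of the common space directly. By Corollary \ref{Hdual}, $Spc(\mathbf{K})^{\vee}$ is exactly the spectral space $Spec_{Zar}(\mathbf{K})$, so $\Omega(Spc(\mathbf{K})^{\vee}) = \Omega(Spec_{Zar}(\mathbf{K}))$. The equivalence of Theorem \ref{spec-cohfr} together with Theorem \ref{corres} then furnishes a canonical frame isomorphism $\mathbf{Zar}(\mathbf{K}) \cong \Omega(Spec_{Zar}(\mathbf{K}))$, sending a radical thick tensor ideal $\mathbf{I}$ to the open set $\bigcup_{k \in \mathbf{I}}\{\mathbf{P} \in Spc(\mathbf{K}) \mid k \notin \mathbf{P}\}$, with finite element $\sqrt{k}$ corresponding to the quasi-compact open $\{\mathbf{P} \mid k \notin \mathbf{P}\}$ by Theorem \ref{corres}(2). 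This provides the required bijection between the underlying sets in (1) and (2).

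Next I would verify that the two topologies agree under this bijection. Since $\mathbf{I}$ is radical, the subbasic open $\{\mathbf{I} \mid k \notin \mathbf{I}\}$ of (1) equals $\{\mathbf{I} \mid \sqrt{k} \nsubseteq \mathbf{I}\}$, which translates to the subbasic open of (2) indexed by the quasi-compact open associated to $\sqrt{k}$. Conversely, given any open $U$ of $Spec_{Zar}(\mathbf{K})$ with associated radical ideal $\mathbf{J}$, coherence of $\mathbf{Zar}(\mathbf{K})$ yields $\mathbf{J} = \bigvee_{k \in \mathbf{J}}\sqrt{k}$, so $\mathbf{J} \nsubseteq \mathbf{I}$ if and only if some $k \in \mathbf{J}$ lies outside $\mathbf{I}$; hence $\{\mathbf{I} \mid \mathbf{J} \nsubseteq \mathbf{I}\} = \bigcup_{k \in \mathbf{J}}\{\mathbf{I} \mid k \notin \mathbf{I}\}$ already belongs to the topology generated by (1). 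The two topologies therefore coincide under the bijection.

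The remaining task is to establish spectrality of $\mathbf{Zar}(\mathbf{K})$ with topology (1). The space is $T_0$ since distinct radical ideals are distinguished by some object $k \in \mathbf{K}$. For sobriety, a direct computation gives $\overline{\{\mathbf{I}_0\}} = \{\mathbf{I} \mid \mathbf{I}_0 \subseteq \mathbf{I}\}$; given an irreducible closed set $C$, I would take $\mathbf{I}_C := \bigcap_{\mathbf{I} \in C}\mathbf{I}$ as the candidate generic point and use the description of $C$ as an intersection of finite unions of subbasic closeds $\{\mathbf{I} \mid k \in \mathbf{I}\}$ to force $\mathbf{I}_C \in C$ (otherwise $C$ would decompose as a finite proper union, contradicting irreducibility), so $C = \overline{\{\mathbf{I}_C\}}$. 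The basic opens $\bigcap_{i=1}^n\{\mathbf{I} \mid k_i \notin \mathbf{I}\}$ form a basis closed under finite intersection, and each is quasi-compact by Alexander's subbase theorem: if such a basic open $B$ is covered by subbasic opens $\{\mathbf{I} \mid k_j \notin \mathbf{I}\}_{j \in J}$, then the smallest radical ideal containing all $k_j$, namely $\bigvee_j \sqrt{k_j}$, must fail to lie in $B$, so some basic-defining element $k_i$ lies in $\bigvee_j \sqrt{k_j}$; by finiteness of $\sqrt{k_i}$ in the coherent frame $\mathbf{Zar}(\mathbf{K})$ (Theorem \ref{cohfr}), $\sqrt{k_i} \subseteq \bigvee_{j \in F}\sqrt{k_j}$ for some finite $F \subseteq J$, producing the required finite subcover.

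The principal obstacle is this final step: the Alexander-subbase argument for quasi-compactness of basic opens, which crucially rests on the coherence of $\mathbf{Zar}(\mathbf{K})$ (and hence on \textbf{Assumption} via Theorem \ref{cohfr}). Once spectrality and the matching of topologies are in place, the homeomorphism between the spaces in (1) and (2) is an immediate consequence of the bijection constructed in the first step.
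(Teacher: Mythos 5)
Your proposal is correct and reaches the same conclusion, but by a genuinely different route for the key point, namely the spectrality claim. The paper simply invokes Banerjee's result (\cite{MR3787524}*{Proposition 4.1}) that a coherent frame endowed with the lower interval topology is a spectral space, then observes that the topology generated by the sets in \eqref{lit1} is precisely the lower interval topology (since $\{\mathbf{J}\mid \mathbf{J}\nsupseteq\mathbf{I}\}=\bigcup_{k\in\mathbf{I}}\{\mathbf{J}\mid k\notin\mathbf{J}\}$), and concludes via the order-preserving bijection from Theorem \ref{spec-cohfr} and Corollary \ref{Hdual}. You instead prove spectrality from scratch: $T_0$ from separation by objects, sobriety by producing $\mathbf{I}_C=\bigcap_{\mathbf{I}\in C}\mathbf{I}$ as the generic point of an irreducible closed $C$ and deriving a contradiction from irreducibility if $\mathbf{I}_C\notin C$, and quasi-compactness of basic opens via the Alexander subbase lemma using the finiteness of principal radicals $\sqrt{k}$ in the coherent frame (Theorem \ref{cohfr}). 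All these steps are sound; in particular your subbase argument correctly shows that a cover of $B=\bigcap_{i}\{\mathbf{I}\mid k_i\notin\mathbf{I}\}$ by subbasic opens $\{\mathbf{I}\mid k_j\notin\mathbf{I}\}$ forces some $k_i\in\bigvee_j\sqrt{k_j}$, and coherence extracts the finite subcover. The identification of the two topologies and the underlying bijection match the paper. What your approach buys is self-containment, replacing an external citation with an explicit verification; what the paper's approach buys is brevity. One small remark: in your matching of topologies you invoke coherence to write $\mathbf{J}=\bigvee_{k\in\mathbf{J}}\sqrt{k}$, but the equality $\{\mathbf{I}\mid\mathbf{J}\nsubseteq\mathbf{I}\}=\bigcup_{k\in\mathbf{J}}\{\mathbf{I}\mid k\notin\mathbf{I}\}$ is a tautology of set containment and needs no coherence; coherence is genuinely needed only in the quasi-compactness step.
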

\begin{proof}
By Theorem \ref{cohfr}, we know that the radical thick tensor ideals $\bf{Zar}(\mathbf{K})$ of a a noncommutative tensor triangulated category $\mathbf{K}$ forms a coherent frame. Thus, by \cite[Proposition 4.1]{MR3787524}, the set $\bf{Zar}(\mathbf{K})$ endowed with the lower interval topology is a spectral space. The lower interval topology on $\bf{Zar}(\mathbf{K})$ is generated by the open sets ($\forall~\mathbf{I}\in  \bf{Zar}(\mathbf{K})$)
$$L(\mathbf{I})=\{\mathbf{J}\in  \bf{Zar}(\mathbf{K})~|~\mathbf{J}\nsupseteq \mathbf{I}\}=\bigcup_{k\in \mathbf{I}}\{\mathbf{J}\in  \bf{Zar}(\mathbf{K})~|~k\notin \mathbf{J}\}$$
In other words, the lower interval topology on $\bf{Zar}(\mathbf{K})$ is generated by the collection in \eqref{lit1}. By Theorem \ref{spec-cohfr} and Corollary \ref{Hdual}, we know that there is an order-preserving bijective correspondence between the radical thick tensor ideals and the open subsets of $Spc(\mathbf{K})^\vee$. Clearly, the lower interval topology on the frame of open subsets of  $Spc(\mathbf{K})^\vee$  is generated by the collection in \eqref{lit2}. Thus, we have the required homeomorphism.
\end{proof}
\begin{rem}
Hilbert’s Nullstellensatz is the most fundamental theorem in algebraic geometry which establishes a bridge between geometry and algebra by relating algebraic sets to ideals in polynomial rings over algebraically closed fields. Another classical fact is that the closed subspaces of the spectrum $Spec(R)$ of a commutative ring $R$ are in bijective correspondence with radical ideals of $R$ which can be viewed as a nullstellensatz-like result. A topological enhancement of this nullstellensatz-like result was provided by Finocchiaro, Fontana and Spirito in \cite{MR3513063} where they showed that this bijective correspondence can be promoted to a homeomorphism. In \cite{MR3787524}, Banerjee provided a similar ``topological nullstellensatz"-like result for a (commutative) tensor triangulated category. Our Theorem \ref{pro:cmpkpnvy} could be seen as a ``topological nullstellensatz" for a noncommutative tensor triangulated category.
\end{rem}

	\bibliographystyle{plain}
	\bibliography{references}
	
\end{document}